\renewcommand{\leq}{\leqslant}
\renewcommand{\geq}{\geqslant}
\newcommand{\R}{\mathds R}
\def\aa{\alpha}
\def\d{{\rm d}}
\def\<{\langle}
\def\>{\rangle}
\def\LL{\Lambda}
 \def\ss{\sqrt}
\def\bb{\beta}
\def\R{\mathbb R}   \def\ss{\sqrt} 
  \def\vv{\varepsilon} 
\def\<{\langle} \def\>{\rangle}  
  \def\nn{\nabla}  
\def\d{\text{\rm{d}}} \def\bb{\beta} \def\aa{\alpha} 
  \def\si{\sigma} 
 \def\beq{\begin{equation}}  \def\F{\mathscr F}
\def\e{\text{\rm{e}}}  \def\OO{\Omega}  
 \def\P{\mathbb P} 
\def\C{\mathscr C}           
  \def\ll{\lambda}
\def\E{\mathbb E} 
  \def\LL{\Lambda}
\def\to{\rightarrow}
\def\8{\infty}\def\3{\triangle}
\def\1{\lesssim}
\renewcommand{\hat}{\widehat}
\newtheorem{theorem}{Theorem}[section]
\newtheorem{lemma}[theorem]{Lemma}
\newtheorem{proposition}[theorem]{Proposition}
\newtheorem{corollary}[theorem]{Corollary}
\theoremstyle{definition}
\newtheorem{definition}[theorem]{Definition}
\newtheorem{remark}[theorem]{Remark}
\numberwithin{equation}{section}
\begin{document}
\allowdisplaybreaks

\title[SDEs with non-uniform dissipativity]{Random periodic solutions for stochastic differential equations with non-uniform dissipativity}

\author{
Jianhai Bao\qquad\,
\and\qquad
Yue Wu}
\date{}
\thanks{\emph{J.\ Bao:} Center for Applied Mathematics, Tianjin University, 300072  Tianjin, P.R. China. \url{jianhaibao@tju.edu.cn}}
\thanks{\emph{Y.\ Wu:}
Department of Mathematics and Statistics, University of Strathclyde, 26 Richmond St, Glasgow G1 1XH,  UK. \url{yue.wu@strath.ac.uk}}

\maketitle

\begin{abstract}
This paper is concerned with the existence and uniqueness of random periodic solutions for stochastic differential equations (SDEs), where the drift terms involved need not to be uniformly dissipative.   On the one hand,
via the reflection coupling approach, we investigate the existence of random periodic solutions in the sense of distribution for SDEs without memory, where the drifts are merely {\it dissipative at long distance}. On the other hand, via the synchronous coupling strategy, we establish respectively  the existence of pathwise random periodic solutions for functional SDEs with a finite time lag and an infinite time lag, in which the drifts  are only {\it dissipative on average} rather than uniformly dissipative with respect to the time parameters.

\medskip

\noindent\textbf{Keywords:} Random periodic solution; non-uniform dissipativity; reflection coupling;  synchronous coupling; functional   stochastic  differential equation

\smallskip

\noindent \textbf{MSC 2020:} 34K13, 34K50, 60H10
\end{abstract}

\section{Introduction and main results}
Since the pioneer work \cite{zhao2009},   random periodic solutions,  describing the widely existing long-term periodic phenomenon, of random dynamical systems have been developed greatly; see e.g.  \cite{DZZ,rpsnumerics2017,luo2014,wu2021,wu2022} for dissipative systems and e.g.  \cite{rpssde2011, rpsspde2011, wu2016, wu2018,zhao2009} for partially dissipative systems.
For the  definitions of pathwise random periodic solutions and random periodic solutions in the sense of distribution, please refer to the Appendix section.

In particular,  based on the foundation built in
\cite{zhao2009}, \cite{rpssde2011} investigated existence of pathwise random periodic solution for a class of semi-linear SDEs with additive noise. Whereafter, \cite{rpsspde2011} extended
\cite{rpssde2011} to semi-linear SPDEs on a bounded
domain with a smooth boundary. Subsequently, \cite{wu2016} and \cite{wu2018} treated anticipating random periodic solutions for SDEs  and SPDEs with multiplicative linear noise, respectively.  For a deterministic system, the traditional approach to establish existence of periodic solutions is to construct firstly a suitable Poincar\'e mapping and then seek out a fixed point. Whereas, such a strategy is no longer powerful for stochastic systems due to the presence of randomness. So far, there are two well-developed ways to handle existence of random periodic solutions for stochastic systems. The pull-back  is one of the potential ways to investigate existence of random periodic solutions for stochastic dissipative systems (e.g. \cite{rpsnumerics2017,luo2014,wu2021,wu2022}). Moreover, the stable adapted random periodic solutions can be shown as limits of the pull-back semi-flows generated by the SDEs/SPDEs involved; see, for instance,   \cite{rpsnumerics2017}. When the stochastic systems under consideration is merely partially dissipative, the pull-back approach does not work. In such case, the method based on a generalized Schauder's fixed point theorem and the Wiener-Sobolev compact embedding was put forward and applied in e.g. \cite{rpssde2011, rpsspde2011, wu2016, wu2018,zhao2009} to investigate existence of random periodic solutions for semi-linear SDEs/SPDEs. Furthermore, the unstable anticipating random periodic solution can be identified as a solution of a coupled forward-backward infinite horizon stochastic integral equation. Note that the latter method does not guarantee the uniqueness of random periodic solutions.

The partial dissipativity in e.g. \cite{rpssde2011, rpsspde2011, wu2016, wu2018,zhao2009} requires that the linear term in the (periodic) drift is dissipative in some directions while it is non-dissipative in the other directions.  Let $b_1:\R\times\R^d\to \R^d$ be dissipative. Now we perturb $b_1$ by a bounded measurable function $b_0:\R\times\R^d\to\R^d$. It is easy to see that the drift term $b:=b_0+b_1$ need not to be dissipative in any directions at   short distance. To be precise, we are interested in such a partial dissipativity, which allows the drift term to be dissipative only at long range whereas non-dissipative at short distance; see the condition \eqref{HH} below for more details. Let us call this partial dissipativity the \emph{dissipativity at long distance}. In the first instance, we sought to investigate the weak existence and uniqueness of random periodic solutions for time-periodic SDEs with dissipativity at long distance.

To motivate the idea of dissipativity on average, let us revisit the uniform dissipative condition adopted in \cite{rpsnumerics2017, luo2014,wu2021,wu2022}. More precisely, for the periodic drift term $b_1:\R\times \R^d\to \R^d$, there exists a constant $\ll>0$ such that
\begin{equation}\label{E0}
\<x-y,b_1(t,x)-b_1(t,y)\>\le-\ll|x-y|^2,\qquad x,y\in\R^d,\quad  t\in\R.
\end{equation}
Note that the periodicity is hindered in the dissipative condition above. To reinforce the periodicity, it is natural to replace
the condition \eqref{E0} by the following one
\begin{equation} \label{E00}
\<x-y,b_1(t,x-b_1(t,y))\>\le\ll(t)|x-y|^2,\qquad x,y\in\R^d,\quad t\in\R,
\end{equation}
for some periodic function  $\ll:\R\to\R.$ Moreover, the mean of $\lambda(\cdot)$ during one period is assumed to be negative (i.e., dissipative), which is  also termed as \emph{dissipativity on average}. This allows $\ll(\cdot)$
to be positive at some time points so that the drift term is non-dissipative in any directions. Compared with \eqref{E0}, the condition \eqref{E00} reflects fairly the periodic property of the drift $b_1.$  In the second phase, provided that the drift term is dissipative on average, we attempt to explore
whether the time-periodic SDE under consideration, namely  functional SDEs with a finite lag and functional SDEs with an infinite time lag,   admits a unique pathwise random periodic solution.

No matter dissipativity at long distance or dissipativity on average, the drift terms under consideration are allowed to be non-dissipative in any direction at short distance or at some time points, as shown in the following   subsections.

\subsection{Random periodic solution for  SDEs: dissipativity at long distance} \label{sec:dald}
For a subinterval  $U\subset \R$, let $C(U;\R^d)$ be the collection of continuous $\R^d$-valued functions on $U$.
 Let $\OO=C_0(\R;\R^d)$,
where
$$C_0(\R;\R^d):=\big\{\omega\in C(\R;\R^d):\omega(0)={\bf 0}\big\}.$$
For each $t\in\R$, let $\pi(t):C(\R;\R^d)\to\R^d$ be the projection operator defined by $\pi(t)\xi=\xi(t), \xi\in C(\R;\R^d). $
Now, we equip $C_0(\R;\R^d)$ with the $\si$-algebra
 $$\F:=\si(\pi(s)|s\in\R )$$
and the filtration
$$\F_t:=\si(\pi(s)|s\le t),\qquad t\in\R.$$
Let  $\P$ be the two-sided $d$-dimensional Wiener measure on $(\OO,\F)$, which obviously is  a measure-preserving probability. Let $\theta$ be the Wiener shift operator defined by $(\theta_t\omega)(s)=\omega(t+s)-\omega(t)$ for all $s,t\in\R$ and $\omega\in\OO.$ Then, $(\OO,\F,\P,\theta)$ is a metric dynamical system.
 For each $\omega\in\OO$ and $t\in\R$, define $W(t,\omega)=\omega(t)$. Then, $(W(t))_{t\in\R}$ is a  $d$-dimensional Brownian motion on the probability space $(\OO,\F,\P).$ For a vector valued  (or matrix-valued)  function $f$ on $\R$ and a positive constant $\tau$, $f$ is said to be $\tau$-periodic if $f(t+\tau)=f(t)$
 for all $t\in\R.$  For  random variables $\xi$ and $\eta$, we write $\xi\overset{d}{=}\eta$ to demonstrate  that they have the same law. Set $\Delta:=\{(t,s)\in \mathbb{R}^2, t\ge s  \}$.

To showcase the dissipativity at long distance, we take the SDE  with an additive noise as a toy example:
\begin{equation}\label{H*}
 \d X(t)=b(t,X(t))\,\d t+\ss{\aa(t)}\, \d W(t),\qquad t\ge s\in\R,
\end{equation}
in which
$$b:\R\times\R^d\to\R^d,\qquad   \aa:\R \to(0,\8).$$

Assume that
\begin{enumerate}
\item[$({\bf A})$] For each fixed $x\in\R^d$, $b(\cdot,x)$ and $\aa(\cdot)$ are $\tau$-periodic and continuous on $\R$. Moreover, $b$ is bounded on bounded sets of $\R\times \R^d$ and there exist   constants $K_1,L\ge0,K_2>0$ such that for all $t\in\R$ and $x,y\in\R^d,$
\begin{equation}\label{HH}
 \<x-y,b(t,x)-b(t,y)\>\le\aa(t)\big(K_1|x-y|^2{\bf 1}_{\{|x-y|\le L\}}-K_2|x-y|^2  {\bf1}_{\{|x-y|> L\}}\big).
\end{equation}
\end{enumerate}

Under $({\bf A})$, in terms of \cite[Theorem 3.1.1]{PR}, \eqref{H*} has a unique strong solution  $(X^{s,x}(t))_{t\ge s}$ with the initial  value  $x\in\R^d$ at the starting time $s\in\R.$
Then, the mapping $\phi:\triangle \times\R^d\times \OO\to\R^d$ defined by
\begin{equation}\label{0j}
(t,s,x,\omega)\mapsto \phi(t,s,x,\omega)=X^{s,x}(t,\omega)
\end{equation}
is a stochastic semi-flow.

Our first main result in this paper is presented as follows.

\begin{theorem}\label{main2}
Under Assumption $({\bf A})$, the stochastic semi-flow $\phi$, defined by \eqref{0j}, has a unique  random $\tau$-periodic solution in the sense of distribution, i.e.,
there exists a stochastic process  $(X^*(t))_{t\in\R}\in L^1(\OO\to\R^d,\F,\P)$  such that for all $(t,s)\in\triangle, h\ge0,$ and $\xi=\xi_s(\cdot)=\xi_0(\theta_{-s}\cdot)\in L^1(\OO\to\R^d,\F_s,\P)$,
\begin{equation*}
X^*(t+h,\omega)\overset{d}{=}\phi(t+h, t, X^*(t,\omega),\omega),\qquad X^*(t+\tau,\omega)\overset{d}{=}X^*(t,\theta_\tau\omega),
\end{equation*}
and
$$\lim_{s\to-\8}X^{s,\xi}(t)\overset{d}{=}X^*(t).$$
\end{theorem}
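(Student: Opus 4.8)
The plan is to run a reflection coupling to convert the long-distance dissipativity \eqref{HH} into an exponential contraction of the (time-inhomogeneous, $\tau$-periodic) transition semigroup in a Wasserstein distance, and then to read off the random periodic solution from the unique fixed point of the associated period map. Write $P_{s,t}$ for the Markov transition operator of \eqref{H*} from time $s$ to time $t$, so that $P_{s,t}^*$ acts on probability measures and $P_{s,t}^*\dd_x=\mathrm{Law}(X^{s,x}(t))$. Since $b(\cdot,x)$ and $\aa(\cdot)$ are $\tau$-periodic, the cocycle identity $\phi(t+\tau,s+\tau,x,\omega)=\phi(t,s,x,\theta_\tau\omega)$ holds pathwise, whence $P_{s,t}^*=P_{s+\tau,t+\tau}^*$; in particular the period map $\Phi:=P_{0,\tau}^*$ will inherit the contraction established below. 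The aim of the coupling step is to show that $\Phi$ is a strict contraction on $(\mathcal P_1(\R^d),W_1^f)$ for a suitable metric $W_1^f$, so that Banach's fixed point theorem produces a unique invariant law from which everything else follows.

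For the contraction I would couple two copies $X,Y$ of \eqref{H*} by reflection: drive $X$ by $W$ and $Y$ by $(I-2e_te_t^{*})\,\d W$ with $e_t=(X(t)-Y(t))/|X(t)-Y(t)|$ up to the coupling time, after which the two copies are driven synchronously so that they coalesce. Because the noise $\ss{\aa(t)}\,I_d$ is scalar and nondegenerate (as $\aa(t)>0$), the Itô correction for the radius vanishes and the radial part $r(t)=|X(t)-Y(t)|$ obeys the one-dimensional equation $\d r=\<e_t,b(t,X)-b(t,Y)\>\,\d t+2\ss{\aa(t)}\,\d\beta$ with $\beta$ a scalar Brownian motion, while \eqref{HH} bounds the drift by $\aa(t)g(r)$ with $g(r)=K_1r\,\mathbf 1_{\{r\le L\}}-K_2r\,\mathbf 1_{\{r>L\}}$. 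The crucial point is that $\aa(t)$ factors out of the radial generator $\aa(t)\big(2\pp_r^2+g(r)\pp_r\big)$, so after the random time change $\d\tilde t=\aa(t)\,\d t$ the radial process is time-homogeneous and one may invoke Eberle's construction of a bounded, concave, strictly increasing $f$ with $f(0)=0$, $c_1 r\le f(r)\le c_2 r$, and $f'(r)g(r)+2f''(r)\le-c\,f(r)$ for some $c>0$. Itô's formula then gives $\E f(r(t))\le\exp\!\big(-c\int_s^t\aa(u)\,\d u\big)\,\E f(r(s))$ up to the coupling time (and $r\equiv0$ thereafter), whence $W_1^f(P_{s,t}^*\mu,P_{s,t}^*\nu)\le\exp\!\big(-c\int_s^t\aa(u)\,\d u\big)\,W_1^f(\mu,\nu)$. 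By periodicity $\int_s^{s+\tau}\aa=\int_0^\tau\aa=:\bar\aa\,\tau$, so $\Phi$ contracts with the uniform factor $\e^{-c\bar\aa\tau}<1$; since $f\asymp r$, $W_1^f$ is equivalent to the usual $W_1$.

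With the contraction in hand, Banach's theorem yields a unique $\mu_0^*\in\mathcal P_1(\R^d)$ with $\Phi\mu_0^*=\mu_0^*$; setting $\mu_t^*:=P_{0,t}^*\mu_0^*$ and extending periodically gives a $\tau$-periodic family with $P_{s,t}^*\mu_s^*=\mu_t^*$ and, by the contraction, $\mu_t^*=\lim_{s\to-\8}P_{s,t}^*\nu$ for every $\nu\in\mathcal P_1(\R^d)$; the long-range dissipativity also supplies the uniform first-moment bound needed to keep everything in $\mathcal P_1$. To realise the process I would choose $X^*(0)$ to be $\F_0$-measurable with law $\mu_0^*$ (possible since $\F_0$ is atomless), put $X^*(t):=\phi(t,0,X^*(0),\omega)$ for $t\ge0$, and for $t<0$ define $X^*(t,\omega):=X^*(t+n\tau,\theta_{-n\tau}\omega)$ with $n$ chosen so that $t+n\tau\ge0$; using $\theta_{-\tau}\theta_\tau=\mathrm{id}$ on $\OO$ and the measurability of $\theta$ one checks that $X^*(t)\in L^1(\OO\to\R^d,\F_t,\P)$ with $\mathrm{Law}(X^*(t))=\mu_t^*$. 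Each assertion is then immediate: the pull-back identity is the defining property of $\mu_t^*$; the periodicity identity follows from $X^*(t,\theta_\tau\omega)=X^*(t+\tau,\omega)$ and the fact that $\theta_\tau$ is measure preserving; and for the flow identity both $X^*(t+h)$ and $\phi(t+h,t,X^*(t),\omega)$ have law $P_{t,t+h}^*\mu_t^*=\mu_{t+h}^*$, the latter because $X^*(t)$ is $\F_t$-measurable and hence independent of the increments driving $\phi$ on $[t,t+h]$. Uniqueness in distribution follows from uniqueness of the fixed point $\mu_0^*$.

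The main obstacle is the coupling/contraction step: producing an Eberle-type concave $f$ adapted to the non-uniform profile $g$ (expansive, rate $K_1$, on $\{r\le L\}$ and dissipative, rate $K_2$, on $\{r>L\}$) and verifying $f'g+2f''\le-cf$, together with the careful treatment of the coupling time and of the time change $\d\tilde t=\aa(t)\,\d t$ that absorbs the periodic diffusivity; the ensuing fixed-point and realisation arguments are comparatively routine.
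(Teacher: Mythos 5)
Your proposal is correct and follows essentially the same route as the paper: a reflection coupling for the additive noise combined with an Eberle-type concave modified distance (the paper's $\varphi$ solving $\varphi''+\gamma\varphi'=-r$ with $\gamma(v)=(K_1+K_2)v\,{\bf 1}_{\{0\le v\le L\}}-K_2v$, comparable to $r$) so that the periodic diffusivity $\aa(t)$ factors out and one obtains the contraction \eqref{eqn:HH*2}. The only difference is cosmetic: you extract the random periodic law via Banach's fixed point theorem for the period map $P_{0,\tau}^*$, whereas the paper feeds the same contraction and a uniform first-moment bound into the general criterion of Proposition \ref{pr}, showing the pull-back laws $\mathscr L_{X^{s,\xi}(t)}$ form a $\mathbb W_\psi$-Cauchy family as $s\to-\infty$; the two extractions are equivalent given the contraction.
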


Below, we make some comments on Theorem \ref{main2}, in particular, concerned with the Assumption ({\bf A}) and the framework \eqref{H*}.

\begin{remark}
It is trivial to see that the condition \eqref{HH} in Assumption $({\bf A})$ goes back to the  uniformly dissipative condition in case of $K_1=L=0$. In particular,  the uniformly dissipative condition is imposed in \cite{wu2021}  to establish existence and uniqueness of random periodic solutions for a class of semi-linear SDEs.
On the other hand, the condition \eqref{HH} shows that the drift term $b$ is merely dissipative at   long distance whereas it need not to be dissipative at the short distance; see, for instance, $b(t)=\aa(t)(x-x^3), x\in\R,$ for some $\aa:\R\to (0,\8)$.  Hence, the condition \eqref{HH} is much weaker than the uniformly dissipative condition. If the drift term of the SDE  involved is uniformly dissipative, we can exploit  the   synchronous coupling approach \cite{Chen}
to obtain existence and uniqueness of random periodic solutions; see, e.g., \cite{rpsnumerics2017,wu2021}. However, provided that the drift term of the SDE we work  on is non-uniformly dissipative (e.g. Assumption ({\bf A})), the statement based on synchronous coupling strategy is therefore violated. As for such setting, we invoke the reflection coupling method to handle the difficulty arising from the non-uniform dissipativity of drifts as showed in the proof of Theorem \ref{main2}. In contrast to  \cite{rpsnumerics2017,wu2021}, we herein establish the weak existence and uniqueness (i.e. in the sense of distribution) rather than the pathwise existence and uniqueness of random periodic solutions for SDEs with  non-uniformly dissipative drifts.
\end{remark}

\begin{remark}
For the sake of succinctness, in Theorem \ref{main2}
we treat  only   the case of additive noises to
elaborate the role of reflection coupling in establishing existence of random periodic solutions for SDEs with non-uniformly dissipative drifts. Whereas, by a close inspection of the argument for  Theorem \ref{main2}, it can be extended without essential difficulties to certain setup of multiplicative noises. In particular, for the setting that  the diffusion term $\si$ can be decomposed via the relationship:  $$(\si\si^*)(t,x)=\aa(t)I_{d\times d}+(\si_0\si_0^*)(t,x),\qquad t\in\R,~x\in\R^d$$ for some $\aa :\R\to(0,\8)$ and $\si_0:\R\times \R^d\to\R^d\otimes\R^d$,  by  applying  the
reflection coupling to the additive noise and the synchronous coupling to the multiplicative noise (see, for example, \cite{RSW,Wang}),  then the assertion in Theorem \ref{main2} remains  valid.
\end{remark}

\subsection{Random periodic solutions for functional SDEs: dissipativity on average}
As aforementioned, dissipativity on average is a slight generalisation of the uniformly dissipative condition. It allows the periodic drift associated with the functional SDE to be non-dissipative at some time within one period. Under such condition, we consider the existence and uniqueness of pathwise random periodic solutions for the functional SDE as a general case. Besides, we drop the Lipschitz condition imposed on the drift term in \cite{luo2014}. The functional SDE describes the evolution of a random system whose future status not only depend on its presence but also on its past, thus it generalises the classical SDE without memory. Such equations are used to model random processes with a memory, where the memory can be a finite time lag or an infinite time lag. Because of the variability on the memory, the analysis for the existence and uniqueness of the pathwise random periodic solution under the two cases will be a little bit different (as shown in Lemma \ref{ll3}) and thus discussed separately.
\subsubsection{Functional SDEs with a finite time lag: dissipativity on average}

We further introduce additional notation.
For a fixed constant $r_0>0$ representing the length of the time lag, set $\mathscr C:=C([-r_0,0];\R^d)$, which is a Polish space endowed with the uniform norm $\|f\|_\8:=\sup_{-r_0\le u\le0}|f(u)|$. With regard to  the function
$f\in C([0,a],\R)$ for some constant $a\ge0,$
we write
$\|f\|_{a,\8}:=\sup_{0\le s\le  a}|f(s)|$.
For each  fixed $t\in\R$
and  $f\in C(\R;\R^d)$,
we define  $f_t\in\C$ by $f_t(u)=f(t+u),u\in[-r_0,0]$, i.e., $f_t$ is a path segment on the interval $[t-r_0,t]$.

In this subsection, we are
  interested in the following functional SDE
\begin{align}
  \label{eq:SPDE}
    \d X(t) =
   b(t,X_t)\,\d t+\si(t,X_t)\,\d W(t), \qquad t\ge s\in\R,
\end{align}
  where
  $$b:\R\times \C\to\R^d, \qquad  \si: \R\times \C\to \R^d\otimes\R^d$$
are measurable.

Concerning the drift term  $b$ and the diffusion term $\si$, we assume that
\begin{enumerate}
\item[({\bf H})] For each $\xi\in\C$, $b(\cdot,\xi)$ and $\si(\cdot,\xi)$ are $\tau$-periodic and continuous on $\R$. Moreover, $b$ is uniformly  bounded on each bounded set of $\R\times\C$ and there exist continuous $\tau$-periodic functions $\lambda_1:\R\to\R, \lambda_2,\ll_3:\R\to[0,\8)$ such that for any $\xi,\eta \in \mathscr{C}$ and $t\in \R$,
  \begin{align}
  &2\langle b(t,\xi)-b(t,\eta),\xi(0)-\eta(0) \rangle \leq \lambda_1(t) |\xi(0)-\eta(0)|^2+\lambda_2(t) \|\xi-\eta\|^2_\infty\label{eqn:A1},\\
   & \|\si(t,\xi) - \si(t,\eta)\|^2_{\rm HS} \le \lambda_3(t) \|\xi - \eta\|^2_\8,\label{eqn:A2}
  \end{align}
  where, for a matrix $A$, $\|A\|_{\rm HS}$ stands for its Hilbert-schmidt norm.
\end{enumerate}

Under Assumption ({\bf H}), according to \cite[Theorem 2.3]{RS}, the functional SDE \eqref{eq:SPDE} has a unique functional solution  $(X_t)_{t\ge s}$. In terminology, the functional solution $(X_t)_{t\ge s}$ is also called the solution path segment or the window process associated with the solution process $(X(t))_{t\ge s}$. There are some different essential  features between $(X(t))_{t\ge s}$ and $(X_t)_{t\ge s}$. For instance, the solution process $(X(t))_{t\ge s}$ is finite dimensional while the segment process $(X_t)_{t\ge s}$ is infinite dimensional since the corresponding state space belongs to a function space; the solution process $(X(t))_{t\ge s}$ is indeed a semi-martingale whereas  the functional solution $(X_t)_{t\ge s}$ is not; the solution process $(X(t))_{t\ge s}$ does not enjoy the semi-flow property nevertheless the functional solution $(X_t)_{t\ge s}$ admits the semi-flow property as showed below;  the solution process $(X(t))_{t\ge s}$ is not Markovian however the segment path $(X_t)_{t\ge s}$ is,
to name a few.
For the classical  monographs  on functional SDEs, we refer to Mohammed \cite{Mo} upon Markov property, trajectory properties and infinitesimal generator in $L^2$-space, and so forth, and Mao \cite{Mao} for wellposedness and  stability analysis.

In this context, we shall work on the infinite dimensional functional solution $(X_t)_{t\ge s}$ rather than the finite dimensional solution process $(X(t))_{t\ge s}$.
Below, we shall write $X_t^{s,\xi}$ in lieu of $X_t$ to emphasize the functional solution $X_t$ starting from the initial value $\xi\in\C$ at the starting time $s\in\R$. Define the    solution path segment  $\phi:$  $\triangle\times \C\times\OO\to\C$ by
\begin{equation}\label{WWW}
\phi(t,s,\xi,\omega)=X_t^{s,\xi}(\omega)=(X^{s,\xi}(u,\omega))_{t-r_0\le u\le t}.
\end{equation}
Note that $\phi$ is jointly measurable and, for all $s\in\R$ and $\omega\in\Omega$, $\phi(s,s,\cdot,\omega)={\rm id}_\C$, where ${\rm id}_\C$ stands for the identity operator on $\C.$
Moreover, due to the strong wellposedness (see e.g. \cite[Theorem 2.3]{RS}) of \eqref{eq:SPDE}, we obviously have
\begin{equation*}
X_t^{s,\xi}=X_t^{r,X_r^{s,\xi}},\qquad s\le r\le t,\quad \xi\in\C.
\end{equation*}
Whence, we obtain that for all  $s\le r\le t$ and $\omega\in\Omega$,
\begin{equation*}
\phi(t,s,\cdot,\omega)=\phi(t,r,\phi(r,s,\cdot,\omega),\omega).
\end{equation*}
 Therefore, we conclude that  the path map defined in \eqref{WWW} is a stochastic semi-flow.

\bigskip
Our second main result in this work is stated as below.

\begin{theorem}\label{thm:main1}
Assume Assumption $({\bf H})$ and suppose further
\begin{equation} \label{WE}
   \ell:=\int_0^\tau\big(\ll_1(r)+2\e^{-c_*(r_0,\tau)}\big(\ll_2(r)+\ll_3(r)+2\ll_3(r) \chi^2\e^{-c_*(r_0,\tau)+2c^*(r_0,\tau)}\big)\big)\,\d r<0,
\end{equation}
where $\chi\approx1.30693$ is the optimal Burkholder-Davis-Gundy upper constant in Lemma \ref{ose} below, and
\begin{align}        \label{WW}
    \begin{split}
c_*(r_0,\tau):=\inf_{0\le u\le \tau,-r_0\le \theta\le0}\int_{u+\theta}^u\ll_1(s)\,\d s, \qquad  c^*(r_0,\tau):=\sup_{0\le u\le \tau,-r_0\le \theta\le0}\int_{u+\theta}^u\ll_1(s)\,\d s.
    \end{split}
\end{align}
Then, the stochastic semi-flow $\phi$, defined in \eqref{WWW}, has a
 a unique pathwise random $\tau$-periodic functional solution. That is, there exists a unique stochastic process $ (X^{*}_t )_{t\in\R}\in L^2(\Omega\to\C,\F,\P)$ such that  for all $t\in\R$ and $h\ge0$,
 \begin{equation*}
X^*_{t+h}(\omega)=\phi(t+h, t, X^*_t(\omega),\omega),\qquad X^*_{t+\tau}(\omega)=X^*_t(\theta_\tau\omega)\quad {\rm a.s. }
\end{equation*}
 and for all $(t,s)\in\triangle$ and $ \xi=\xi_s(\cdot)=\xi_0(\theta_{-s}\cdot) \in L^2(\Omega\to\C,\F_s,\P)$,
\begin{equation}\label{eqn:lim_sde}
    \lim_{s\to-\8}\E\|X^{s,\xi}_t-X^*_t\|_\infty^2=0.
\end{equation}
\end{theorem}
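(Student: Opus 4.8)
The plan is to base everything on a synchronous-coupling contraction estimate in the segment sup-norm, then to extract the random periodic solution as a pull-back Cauchy limit as the starting time tends to $-\8$, and finally to transfer the contraction to obtain both the periodicity identity and uniqueness.

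First I would set up the synchronous coupling: fixing $s$ and two initial segments $\xi,\eta\in\C$, I would drive $X^{s,\xi}$ and $X^{s,\eta}$ by the \emph{same} Brownian motion $W$. Writing $Z(t):=X^{s,\xi}(t)-X^{s,\eta}(t)$ with segment $Z_t$, It\^o's formula together with \eqref{eqn:A1}--\eqref{eqn:A2} gives
\[
\d|Z(t)|^2\le\big(\ll_1(t)|Z(t)|^2+(\ll_2(t)+\ll_3(t))\|Z_t\|_\8^2\big)\,\d t+\d M(t),
\]
with $M$ the martingale from the diffusion. Multiplying by the integrating factor $\Gamma(t):=\exp\big(-\int_s^t\ll_1(u)\,\d u\big)$ removes the $\ll_1|Z|^2$ term and, after taking expectations, yields
\[
\E\big[\Gamma(t)|Z(t)|^2\big]\le|Z(s)|^2+\int_s^t\Gamma(u)\big(\ll_2(u)+\ll_3(u)\big)\E\|Z_u\|_\8^2\,\d u.
\]
The obstruction is the memory term $\E\|Z_u\|_\8^2=\E\sup_{u-r_0\le v\le u}|Z(v)|^2$, a window supremum rather than a pointwise value, which cannot be closed by a naive Gr\"onwall argument.

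The crux of the argument --- and the step I expect to be hardest --- is converting this window supremum back into controllable quantities; this is exactly where the constants $c_*(r_0,\tau),c^*(r_0,\tau)$ of \eqref{WW} and the optimal BDG constant $\chi$ of Lemma \ref{ose} enter. For $v\in[u-r_0,u]$ the ratio $\Gamma(v)/\Gamma(u)=\exp\big(\int_v^u\ll_1\big)$ is pinned between $\e^{c_*}$ and $\e^{c^*}$ by the $\tau$-periodicity of $\ll_1$, which trades $\sup_v|Z(v)|^2$ for $\sup_v\Gamma(v)|Z(v)|^2$ at the cost of the factor $\e^{-c_*}$. Representing $Z(v)$ on the window as an It\^o integral from its left endpoint and applying the BDG inequality of Lemma \ref{ose} to the martingale part then produces the $\chi^2$ term together with the additional weight $\e^{-c_*+2c^*}$; collecting all contributions reproduces precisely the integrand of \eqref{WE}. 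The outcome I am aiming for is a one-period contraction of the shape $\E\|Z_{t+\tau}\|_\8^2\le\e^{\ell}\,\E\|Z_t\|_\8^2$, equivalently $\E\|X_t^{s,\xi}-X_t^{s,\eta}\|_\8^2\le C\,\e^{\ell(t-s)/\tau}\|\xi-\eta\|_\8^2$, which genuinely contracts because $\ell<0$.

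With the contraction in hand the remaining steps are routine. A parallel a priori estimate --- the same It\^o computation using that $b$ is bounded on bounded sets --- gives a uniform-in-$s$ moment bound $\sup_{r\ge s}\E\|X_r^{s,\xi}\|_\8^2\le C(1+\|\xi\|_\8^2)$. Fixing $t$ and using the semi-flow identity $X_t^{s,\xi}=X_t^{s',X_{s'}^{s,\xi}}$ for $s<s'\le t$, the contraction bounds $\E\|X_t^{s,\xi}-X_t^{s',\xi}\|_\8^2$ by $C\,\e^{\ell(t-s')/\tau}\,\E\|X_{s'}^{s,\xi}-\xi\|_\8^2\to0$ as $s,s'\to-\8$; hence $(X_t^{s,\xi})_s$ is Cauchy in $L^2(\OO\to\C,\F_t,\P)$ with a limit $X^*_t$ independent of $\xi$, which is \eqref{eqn:lim_sde}. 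Passing to the limit in the flow relation gives $X^*_{t+h}=\phi(t+h,t,X^*_t)$; the identity $X^*_{t+\tau}(\oo)=X^*_t(\theta_\tau\oo)$ follows from the $\tau$-periodicity of the coefficients and the cocycle relation $X_{t+\tau}^{s+\tau,\xi}(\oo)=X_t^{s,\xi}(\theta_\tau\oo)$ on letting $s\to-\8$. Finally, uniqueness is immediate: any other random periodic solution $Y^*$ obeys $\E\|X^*_t-Y^*_t\|_\8^2\le C\,\e^{\ell(t-s)/\tau}\,\E\|X^*_s-Y^*_s\|_\8^2$, and sending $s\to-\8$ forces $X^*\equiv Y^*$.
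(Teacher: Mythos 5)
Your proposal follows essentially the same route as the paper's proof: a synchronous-coupling contraction in the segment sup-norm obtained via the integrating factor $\e^{-\int_s^t\ll_1(u)\,\d u}$, with the window supremum controlled through $c_*(r_0,\tau)$, $c^*(r_0,\tau)$ and the sharp BDG constant $\chi$ exactly as in Lemmas \ref{lem:boundedness1} and \ref{lem:stable1}, combined with the cocycle identity of Lemma \ref{lemma3} and a pull-back Cauchy argument as $s\to-\8$. The only difference is organizational: the paper packages the limiting construction, periodicity and uniqueness into the abstract criterion of Proposition \ref{pr2} and then verifies its hypotheses, whereas you carry out that verification directly; the mathematical content is the same and your argument is correct.
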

\smallskip

For any $t\in\R$, if $\ll_1(t)\equiv-\ll_1$, and $\ll_2(t)\equiv\ll_2$ for some constants $\ll_1>0,\ll_2\ge0$, then the assumption  \eqref{eqn:A1} reduces  to the classical dissipative-type assumption.
Furthermore, we assume  $\ll_3(t)\equiv\ll_3$ for some $\ll_3\ge0$. In this setting, by noting $c_*(r_0,\tau)= -\ll_1 r_0$ and $c*(r_0,\tau)=0$.  Thus, we obtain the following corollary.
\bigskip
\begin{corollary}
Assume Assumption $({\bf H})$ with $\ll_1(t)\equiv-\ll_1, \ll_2(t)=\ll_2$ and $\ll_3(t)=\ll_3$ for some $\ll_1>0,\ll_2,\ll_3\ge0$ and suppose further
\begin{equation}\label{EW}
 \ll_1>2\e^{ \ll_1 r_0}\big(\ll_2+\ll_3+2\ll_3\chi^2\e^{ \ll_1 r_0}\big),
\end{equation}
where $\chi\approx1.30693$.
Then, the stochastic semi-flow $\phi$, defined in \eqref{WWW}, admits a
 unique pathwise random $\tau$-periodic solution $ (X^{*}_t)_{t\in\R} \in L^2(\Omega\to\C,\F,\P)$ satisfying   for all $(t,s)\in\triangle$ and $ \xi =\xi_s(\cdot)=\xi_0(\theta_{-s}\omega) \in L^2(\Omega\to\C,\F_s,\P)$,
\begin{equation*}
    \lim_{s\to-\8}\E\|X^{s,\xi}_t-X^*_t\|_\infty^2=0.
\end{equation*}
\end{corollary}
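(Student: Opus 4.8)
The plan is to obtain the Corollary as an immediate specialisation of Theorem \ref{thm:main1}; no new machinery is required, and the whole task reduces to checking that the standing hypotheses of the Corollary force the integral condition \eqref{WE}. First I would observe that the constant choices $\ll_1(\cdot)\equiv-\ll_1$, $\ll_2(\cdot)\equiv\ll_2$, $\ll_3(\cdot)\equiv\ll_3$ with $\ll_1>0$ and $\ll_2,\ll_3\ge0$ are trivially continuous and $\tau$-periodic, so Assumption $({\bf H})$ holds verbatim once the structural estimates \eqref{eqn:A1} and \eqref{eqn:A2} are in force; in particular the wellposedness and the semi-flow property recorded before Theorem \ref{thm:main1} are unaffected.

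The only genuine computation is to evaluate the two constants in \eqref{WW}. With $\ll_1(s)\equiv-\ll_1$ the inner integral is independent of $u$, namely, for $\theta\in[-r_0,0]$,
\[
\int_{u+\theta}^{u}\ll_1(s)\,\d s=-\ll_1\big(u-(u+\theta)\big)=\ll_1\theta,
\]
which is linear, hence monotone, in $\theta$ and therefore ranges over $[-\ll_1 r_0,0]$. Taking the infimum and supremum over $0\le u\le\tau$ and $-r_0\le\theta\le0$ gives $c_*(r_0,\tau)=-\ll_1 r_0$ (attained at $\theta=-r_0$) and $c^*(r_0,\tau)=0$ (attained at $\theta=0$), exactly as announced in the text preceding the Corollary. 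Substituting these into \eqref{WE} yields $\e^{-c_*(r_0,\tau)}=\e^{\ll_1 r_0}$ and $\e^{-c_*(r_0,\tau)+2c^*(r_0,\tau)}=\e^{\ll_1 r_0}$, so the integrand becomes constant in $r$ and
\[
\ell=\tau\Big(-\ll_1+2\e^{\ll_1 r_0}\big(\ll_2+\ll_3+2\ll_3\chi^2\e^{\ll_1 r_0}\big)\Big).
\]
Because $\tau>0$, the sign requirement $\ell<0$ is equivalent to \eqref{EW}.

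Consequently, the hypotheses of the Corollary imply every hypothesis of Theorem \ref{thm:main1}, and the conclusion---the existence and uniqueness of the pathwise random $\tau$-periodic solution $X^*_t\in L^2(\Omega\to\C,\F_t,\P)$ together with the convergence \eqref{eqn:lim_sde}---follows directly by invoking that theorem. There is no real obstacle here: the single point to verify is that the extrema in \eqref{WW} are indeed attained at the endpoints $\theta=-r_0$ and $\theta=0$, which is immediate from the linearity of the inner integral in $\theta$ for constant $\ll_1$. All of the analytic effort, namely the It\^o and coupling estimates producing the contraction governed by $\ell$, has already been absorbed into the proof of Theorem \ref{thm:main1}, so the Corollary is pure bookkeeping.
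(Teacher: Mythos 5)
Your proposal is correct and follows exactly the paper's route: the paper likewise obtains the corollary by noting that for constant $\ll_1(\cdot)\equiv-\ll_1$ one has $c_*(r_0,\tau)=-\ll_1 r_0$ and $c^*(r_0,\tau)=0$, so that \eqref{WE} reduces to \eqref{EW}, and then invoking Theorem \ref{thm:main1}. Your write-up is in fact slightly more careful than the paper's one-line justification, since you verify explicitly that the extrema in \eqref{WW} are attained at the endpoints $\theta=-r_0$ and $\theta=0$.
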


\bigskip
 \begin{remark}
 When the drift terms  are uniformly dissipative, existence of random periodic functional solutions was addressed in \cite{luo2014} for functional SDEs with {\it Lipschitz continuous coefficients}.
 In Theorem \ref{thm:main1},     the functional SDE involved need not to be uniformly  dissipative   pointwise w.r.t. the time  variable.  In other words, the functional SDE we are interested in is allowed to be {\it non-dissipative at some time points}.
More precisely, in terms of \eqref{WE},  we require merely the functional SDE we are handling  are {\it dissipative  on average}. For classical SDEs without memory, the uniformly dissipative condition   in e.g. \cite{rpsnumerics2017,wu2021} is uniform with respect to the time variables. In certain sense, the uniformly dissipative condition imposed in \cite{rpsnumerics2017,wu2021} does not embody very well the periodic property of coefficients. Comparing with the existing literature e.g. \cite{rpsnumerics2017,wu2021},
 our Theorem \ref{thm:main1} is new even for SDEs without memory.
 \end{remark}

\smallskip
\begin{remark}
For classical SDEs without memory, which are dissipative at long distance, we apply the reflection coupling approach to establish existence of random periodic solutions. However, concerning functional SDEs, the reflection coupling method no longer work since the state space of the functional solution is a function space. So, instead of the reflection coupling strategy, we invoke the synchronous coupling (see Lemma \ref{lem:stable1} for more details) to investigate existence of random periodic functional solutions to functional SDEs, which are dissipative {\it on average} rather than uniformly dissipative.
\end{remark}

\subsubsection{Functional SDEs with an infinite time lag: dissipativity on average}
In this subsection, we aim to extend Theorem \ref{thm:main1} to functional SDEs with an infinite time lag; see e.g. \cite{MYW,WYM} for more backgrounds and long-term behavior.
For this purpose, we introduce the following set:
for a fixed constant $\aa_0>0$,
\begin{equation*}
 \C_{\aa_0} :=\Big\{\xi\in C((-\8,0];\R^d):\|\xi\|_{\aa_0}:= \sup_{-\8<\theta\le0}(\e^{\aa_0\theta}|\xi(\theta)|)<\8\Big\},
\end{equation*}
which is a Polish space by equipping the metric induced by $\|\cdot\|_{\aa_0}$.

In this subsection, we still work on \eqref{eq:SPDE} but with the finite time lag replaced by the infinite one. Namely, we consider
the following functional SDE
\begin{align}
  \label{SDE}
    \d X(t) =
   b(t,X_t)\,\d t+\si(t,X_t)\,\d W(t), \qquad t\ge s\in\R,
\end{align}
  where
  $$b:\R\times \C_{\aa_0}\to\R^d, \qquad  \si: \R\times \C_{\aa_0}\to \R^d\otimes\R^d$$
are measurable.

Below, we suppose that
\begin{enumerate}
\item[$({\bf H}')$] For each $\xi\in\C_{\aa_0}$, $b(\cdot,\xi)$ and $\si(\cdot,\xi)$ are $\tau$-periodic. Moreover, $b$ is uniformly  bounded on each bounded set of $\R\times\C_{\aa_0}$ and there exist $\tau$-periodic functions $\lambda_1:\R\to\R, \lambda_2,\lambda_3:\R\to[0,\8)$ such that for any $\xi,\eta \in \mathscr{C}_{\aa_0}$ and $t\in \R$,
  \begin{align}
  &2\langle b(t,\xi)-b(t,\eta),\xi(0)-\eta(0) \rangle \leq \lambda_1(t) |\xi(0)-\eta(0)|^2+\lambda_2(t) \|\xi-\eta\|^2_{\aa_0}\label{AA1},\\
   & \|\si(t,\xi) - \si(t,\eta)\|^2_{\rm HS} \le \lambda_3(t) \|\xi - \eta\|^2_{\aa_0}.\label{AA2}
  \end{align}
\end{enumerate}

Under the Assumption $({\bf H}')$, \eqref{SDE} has a unique functional solution $(X_t^{s,\xi})_{t\ge s}$ with the initial value $\xi\in\C_{\aa_0}$ at the time $s\in\R;$ see, for instance, \cite[Theorem A.1]{BWY} for more details. Thus, the solution path segment $\phi:\triangle \times\C_{\aa_0}\times\OO\to \C_{\aa_0}$, defined by
\begin{equation}\label{J*}
\phi(t,s,\xi,\omega)=X_t^{s,\xi}(\omega)=(X^{s,\xi}(u,\omega))_{-\8<u\le t}
\end{equation}
is a stochastic semi-flow.

Our third main result in this paper is described as below.
\begin{theorem}\label{thm3}
Assume $({\bf H}')$ and suppose further that
\begin{equation}\label{BB1}
    \int_0^\tau(\ll_1(u)+2\aa_0)\,\d u\ge0,
\end{equation}
and
\begin{equation}\label{B*}
\int_0^\tau\big(\ll_1(u)+\ll_2(u)+(1+2\chi^2)\ll_3(u)\big)\,\d u<0,
\end{equation}
where $\chi\approx1.30693$ and
\begin{equation}\label{BB}
  \ll_{\aa_0,\tau} :=\sup_{\theta\le0,0\le\aa,\bb\le\tau}\bigg(\frac{1}{\tau} (\theta+\aa)\int_0^\tau(\ll_1(u)+2\aa_0)\,\d u-\int^{\aa+\bb}_{\bb}(\ll_1(u)+2\aa_0)\,\d u\bigg).
\end{equation}
Then, the stochastic semi-flow $\phi$, given  by \eqref{J*}, has a
 a unique pathwise random $\tau$-periodic functional solution  $ (X^{*}_t)_{t\in\R} \in L^2(\Omega\to\C_{\aa_0},\F,\P)$ satisfying   for all $(t,s)\in\triangle$ and $ \xi=\xi_s(\cdot)=\xi_0(\theta_{-s}\omega) \in L^2(\Omega\to\C_{\aa_0},\F_s,\P)$,
\begin{equation*}
    \lim_{s\to-\8}\E\|X^{s,\xi}_t-X^*_t\|_{\aa_0}^2=0.
\end{equation*}
\end{theorem}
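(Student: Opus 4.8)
The plan is to adapt the two-step scheme behind Theorem~\ref{thm:main1} to the weighted state space $\C_{\aa_0}$: first prove a synchronous-coupling stability estimate showing that two functional solutions driven by the \emph{same} Brownian motion contract in $L^2(\OO\to\C_{\aa_0})$ over one period, and then build $X^*$ as a pull-back limit of $X_t^{s,\xi}$ as $s\to-\8$. All estimates must be carried in $\|\cdot\|_{\aa_0}=\sup_{\theta\le0}\e^{\aa_0\theta}|\cdot(\theta)|$, and the essential new feature compared with the finite-lag case of Lemma~\ref{lem:stable1} is that the supremum now ranges over the whole past $(-\8,0]$; this is precisely the difference flagged in Lemma~\ref{ll3}. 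Granting the per-period contraction, the construction and uniqueness of $X^*$ are routine.

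For the stability estimate I would fix $s\in\R$, take $\xi,\eta\in L^2(\OO\to\C_{\aa_0},\F_s,\P)$, set $X=X^{s,\xi}$, $Y=X^{s,\eta}$, $Z=X-Y$, and apply It\^o's formula to $|Z(t)|^2$. Inserting \eqref{AA1}--\eqref{AA2} (with $\xi(0)-\eta(0)=Z(t)$ and $\|\xi-\eta\|_{\aa_0}=\|X_t-Y_t\|_{\aa_0}$) gives
\begin{equation*}
\d|Z(t)|^2\le\big(\ll_1(t)|Z(t)|^2+(\ll_2(t)+\ll_3(t))\|X_t-Y_t\|_{\aa_0}^2\big)\,\d t+\d M(t),
\end{equation*}
with $M$ a continuous local martingale whose quadratic variation is controlled by $\ll_3$. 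The delicate step is to pass from this bound on $\E|Z(t)|^2$ to a bound on $\E\|X_t-Y_t\|_{\aa_0}^2=\E\sup_{\theta\le0}\e^{2\aa_0\theta}|Z(t+\theta)|^2$. Splitting the supremum at $t+\theta=s$, the frozen part ($t+\theta\le s$) equals $\e^{\aa_0\theta}|\xi(t+\theta-s)-\eta(t+\theta-s)|$ and is $\le\e^{-\aa_0(t-s)}\|\xi-\eta\|_{\aa_0}$, hence is dragged to $0$ by the weight. On the evolved part ($s\le t+\theta\le t$) I would run Gr\"onwall on the weighted process together with the Burkholder--Davis--Gundy inequality of Lemma~\ref{ose}; the $\ll_3$-contribution enters once directly through the Hilbert--Schmidt term and once through the BDG bound on $\sup M$, producing the coefficient $(1+2\chi^2)$ of $\ll_3$ in \eqref{B*}. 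Because the weight $\e^{2\aa_0\theta}$ shifts the time origin, the relevant exponent over each sub-window is $\int(\ll_1+2\aa_0)$, and taking the worst case over the shift $\theta\le0$ and the phase $\aa,\bb\in[0,\tau]$ is exactly the constant $\ll_{\aa_0,\tau}$ of \eqref{BB}.

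Assembling the two contributions over one period yields a bound of the form $\E\|X_{t+\tau}^{s,\xi}-X_{t+\tau}^{s,\eta}\|_{\aa_0}^2\le C\,\e^{\,\int_0^\tau(\ll_1+\ll_2+(1+2\chi^2)\ll_3)(u)\,\d u}\,\E\|X_t^{s,\xi}-X_t^{s,\eta}\|_{\aa_0}^2$, where the prefactor $C$ is built from $\e^{\ll_{\aa_0,\tau}}$ and is \emph{finite exactly because of} \eqref{BB1}: the period-integral of $\ll_1+2\aa_0$ being nonnegative is what prevents the first term in \eqref{BB} from diverging as $\theta\to-\8$, so $\ll_{\aa_0,\tau}<\8$. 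Condition \eqref{B*} then makes the exponential rate strictly less than $1$. Iterating over the periods $s,s-\tau,s-2\tau,\dots$ shows $s\mapsto X_t^{s,\xi}$ is Cauchy in $L^2(\OO\to\C_{\aa_0})$ (a finite prefactor does not spoil a geometric rate $<1$); the limit $X_t^*$ is independent of $\xi$ by the same contraction, which gives the asserted $L^2$-convergence. Passing to the limit $s\to-\8$ in $X_{t+h}^{s,\xi}=\phi(t+h,t,X_t^{s,\xi},\cdot)$ yields the semi-flow identity for $X^*$, while the $\tau$-periodicity of $b,\si$ and the cocycle relation $\phi(t+\tau,s+\tau,\xi,\omega)=\phi(t,s,\xi,\theta_\tau\omega)$ give $X_{t+\tau}^*(\omega)=X_t^*(\theta_\tau\omega)$ a.s.; uniqueness follows by applying the same contraction to any two periodic solutions and letting $s\to-\8$.

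The main obstacle is the weighted infinite-history stability estimate of the second paragraph. In the finite-lag setting the segment sup is over the bounded window $[-r_0,0]$, so the interaction between the weight and the sign-changing $\ll_1(\cdot)$ is harmless; here the sup runs over $(-\8,0]$ and one must simultaneously (i) ensure the distant past does not blow up under the $\e^{\aa_0\theta}$ weighting, which forces the extremal constant $\ll_{\aa_0,\tau}$ to be finite and is guaranteed by \eqref{BB1}, and (ii) force a net one-period contraction despite $\ll_1$ possibly being positive on part of $[0,\tau]$, which is the content of \eqref{B*}. Getting the bookkeeping of these two competing effects right --- the weight rate $\aa_0$ against the on-average dissipativity --- is the crux of the argument.
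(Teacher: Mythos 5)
Your proposal follows essentially the same route as the paper: Lemma \ref{ll3} proves exactly the weighted synchronous-coupling contraction you describe, splitting the $\|\cdot\|_{\aa_0}$-supremum at $t+\theta=s$, using \eqref{BB1} to control both the frozen past and the constant $\ll_{\aa_0,\tau}$, and extracting the coefficient $(1+2\chi^2)\ll_3$ from the Hilbert--Schmidt term plus the sharp BDG bound of Lemma \ref{ose}, after which the pull-back/Cauchy construction is formalized via Proposition \ref{pr2}. The only ingredient you leave implicit is the uniform second-moment bound \eqref{B-}, which the Cauchy iteration needs and which the paper likewise dispatches as a routine variant of the same estimate.
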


As a byproduct of Theorem \ref{thm3}, it is immediate to
 obtain the following corollary.

\begin{corollary}\label{co}
Assume $({\bf H'})$ with $\ll_1(u)\equiv-\ll_1$,
$\ll_2(u)\equiv\ll_2$ and $\ll_3(u)\equiv\ll_3$ for some $\ll_1,\ll_2,\ll_3>0$  and suppose further that
\begin{equation}\label{J**}
    2\aa_0\ge \ll_1>\ll_2 +(1+2\chi^2)\ll_3.
\end{equation}
 Then, the stochastic semi-flow, generated by the functional solution $(X_t^{s,\xi})_{t\ge s}$ to \eqref{SDE}, possesses a unique pathwise random $\tau$-periodic functional solution.
\end{corollary}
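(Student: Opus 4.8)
The plan is to treat Corollary \ref{co} as the constant-coefficient specialization of Theorem \ref{thm3}: the entire argument consists of checking that, when $\ll_1(u)\equiv-\ll_1$, $\ll_2(u)\equiv\ll_2$ and $\ll_3(u)\equiv\ll_3$, the two hypotheses \eqref{BB1} and \eqref{B*} collapse precisely onto the single chain \eqref{J**}. Since constant functions are trivially $\tau$-periodic and the sign constraints in $({\bf H}')$ are met, Assumption $({\bf H}')$ is inherited verbatim, so only the two integral conditions need attention. With each integrand constant on $[0,\tau]$, every period integral is simply $\tau$ times its value.

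First I would evaluate \eqref{BB1}: here $\int_0^\tau(\ll_1(u)+2\aa_0)\,\d u=\tau(2\aa_0-\ll_1)$, which is nonnegative exactly when $2\aa_0\ge\ll_1$, i.e.\ the left inequality of \eqref{J**}. Next, for \eqref{B*} I would compute $\int_0^\tau\big(\ll_1(u)+\ll_2(u)+(1+2\chi^2)\ll_3(u)\big)\,\d u=\tau\big(-\ll_1+\ll_2+(1+2\chi^2)\ll_3\big)$, which is strictly negative exactly when $\ll_1>\ll_2+(1+2\chi^2)\ll_3$, the right inequality of \eqref{J**}. Hence \eqref{J**} is equivalent to the conjunction of \eqref{BB1} and \eqref{B*} in this regime, and Theorem \ref{thm3} applies directly, yielding the asserted unique pathwise random $\tau$-periodic functional solution together with the $L^2$-convergence as $s\to-\8$.

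As a consistency check I would also reduce the auxiliary constant $\ll_{\aa_0,\tau}$ of \eqref{BB}. Writing $m:=2\aa_0-\ll_1\ge0$, one has $\int_0^\tau(\ll_1(u)+2\aa_0)\,\d u=\tau m$ and $\int_\bb^{\aa+\bb}(\ll_1(u)+2\aa_0)\,\d u=m\aa$, so the bracket in \eqref{BB} becomes $\frac1\tau(\theta+\aa)(\tau m)-m\aa=m\theta$; taking the supremum over $\theta\le0$ with $m\ge0$ gives $\ll_{\aa_0,\tau}=0$, independently of $\aa,\bb$. This clarifies the structural role of \eqref{BB1}: it is exactly the requirement that keeps $\ll_{\aa_0,\tau}$ finite, for if $m<0$ the coefficient of $\theta$ would be negative and the supremum over $\theta\to-\8$ would diverge, so that Theorem \ref{thm3} could no longer be invoked.

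Because the corollary is a pure substitution into Theorem \ref{thm3}, I do not expect a genuine obstacle. The only points demanding care are book-keeping ones: matching the non-strict inequality \eqref{BB1} with the non-strict half $2\aa_0\ge\ll_1$ of \eqref{J**}, matching the strict inequality \eqref{B*} with the strict half $\ll_1>\ll_2+(1+2\chi^2)\ll_3$, and confirming that the reduced $\ll_{\aa_0,\tau}$ is finite so that the proof of Theorem \ref{thm3} transfers without modification.
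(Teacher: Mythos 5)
Your proposal is correct and follows essentially the same route as the paper: the paper's proof likewise observes that in this constant-coefficient setting $\ll_{\aa_0,\tau}=0$ and then applies Theorem \ref{thm3} using \eqref{J**}. Your explicit verification that \eqref{BB1} and \eqref{B*} reduce exactly to the two halves of \eqref{J**} merely spells out what the paper leaves as ``it is easy to see.''
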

\begin{proof}
In the setting of Corollary \ref{co}, it is easy to see that  $\ll_{\aa_0,\tau}=0$ so the proof of Corollary \ref{co} is finished by applying Theorem \ref{thm3} and taking \eqref{J**} into consideration.
\end{proof}

The content of this paper is organized as follows. In Section \ref{sec3}, we provide a general criteria on existence and uniqueness in the sense distribution of random periodic solutions for stochastic semi-flows. As an application, we complete the proof of Theorem \ref{main2}. Section \ref{sec:existence} is devoted to the proofs of Theorems \ref{thm:main1} and \ref{thm3} via a refined principle upon existence and uniqueness of pathwise random periodic solutions for stochastic semi-flows.

\section{Proof of Theorem \ref{main2}}\label{sec3}
To begin, we introduce some additional notation. Let $\mathscr P(\R^d)$ be the collection of probability measures on $\R^d$. For a distance-like function $\psi$ on $\R^d\times\R^d$ (i.e., $\psi:\R^d\times\R^d\to[0,\8)$ is symmetric, lower semi-continuous and such that $\psi(x,y)=0\Leftrightarrow x=y$; see e.g. \cite[Definition 4.3]{HMS}), set
$$\mathscr P_\psi(\R^{d}):=\Big\{\mu\in\mathscr P(\R^d):\int_{\R^d}\psi(x,{\bf0})\,\mu(\d x)<\8\Big\}.$$
Define the quasi-Wasserstein distance
$\mathbb W_\psi$ (see e.g. \cite[(4.3)]{HMS}) by
 \begin{equation*}
 \mathbb W_\psi(\mu,\nu)= \inf_{\pi\in \mathcal C} \int_{\R^d\times \R^d}  \psi(x,y)\pi(\d x,\d y),\qquad \mu,\nu\in\mathscr P_\psi(\R^{d}),
\end{equation*}
where $\mathcal C$ denotes the set of couplings for $\mu$ and $\nu.$ Under $\mathbb W_\psi$, the space $\mathscr P_\psi(\R^{d})$ is complete, that is, every $\mathbb W_\psi$-Cauchy sequence in $\mathscr P_\psi(\R^{d})$ converges under $\mathbb W_\psi$.
In  case of   $\psi(x,y)=|x-y|$,  $\mathbb W_\psi$ is in fact the classical $L^1$-Wasserstein distance. For  this case, in the following part we shall write $\mathbb W_1$ instead of $\mathbb W_\psi.$ On the other hand, if $\psi(x,y)={\bf 1}_{\{x\neq y\}}$, $\mathbb W$ is indeed the total variation distance. In general, for a distance-like function $\psi$ on $\R^d\times\R^d$, the classical triangle inequality associated with $\psi$ might be violated 
so $\mathbb W_\psi$ need not to be a genuine metric on $\mathscr P_\psi(\R^{d})$.   If the distance-like function $\psi$ on $\R^d\times\R^d$ further satisfies a weak form of the triangle inequality, i.e., there exists a constant $c_0>0$  such that
\begin{equation}\label{WU}
  \psi(x,y)  \le c_0\big(\psi(x,z)+\psi(z,y)\big), \qquad x,y,z\in\R^d,
\end{equation}
then the corresponding quasi-Wasserstein distance $\mathbb W_\psi$  obeys a weak form of the triangle inequality as well.
  Concerning a random variable $\xi$, denote $\mathscr L_\xi$ by its law. In some occasion, for a random variable $\xi$,  we write $\xi\sim\mu\in\mathscr P(\R^d)$ to explicate that the law of $\xi$ is $\mu.$
 Let
$L_\psi(\OO\to\R^d,\F_s,\P)$
be the space of all $\F_s$-measurable $\R^d$-valued random variables $\xi$ such that $\mathscr L_\xi\in\mathscr P_\psi(\R^{d})$.

Before we start to finish the proof of Theorem \ref{main2}, we prepare the following general criteria, which indeed is a weak version of \cite[Theorem 3.2.4]{luo2014}, to establish existence and uniqueness in the sense of distribution of random periodic solutions for stochastic dynamical systems.

\bigskip
\begin{proposition}\label{pr} For  an $\R^d$-valued Markov process $(Y^{s,x}(t))_{t\ge s}$ with the initial value $x\in \R^d$ at the starting point $s\in\R,$
assume that
\begin{enumerate}
\item[$(i)$]$(Y^{s,\xi}(t))_{t\ge s}$ has continuous trajectories and enjoys the semi-flow property;

\item[$(ii)$]For all $\xi=\xi_s\in L_{\psi}(\OO\to\R^d, \F_s,\P)$, there exists a constant $ C_0(\xi)>0$ such that
\begin{equation*}
 \sup_{t\ge s} \mathbb W_\psi\big(\mathscr L_{Y^{s,\xi}(t)},\delta_{\bf0}\big)\le C_0(\xi),
\end{equation*}
where the distance-like function $\psi$ on $\R^d\times\R^d$ satisfies the weak triangle inequality \eqref{WU} and $\delta_{\bf0}$ is the Dirac measure centered at the point ${\bf0}$;

\item[$(iii)$] There exists a   function $h:[0,\8)\to[0,\8)$ satisfying for each fixed $t\in\R$ and some $\tau_0>0$,
\begin{equation}\label{L1*}
    \lim_{s\to-\8}\sum_{j=0}^\8h(t-s+j\tau_0)=0
\end{equation}
and such that for all $(t,s)\in\triangle$ and $ \xi=\xi_s,\eta=\eta_s\in  L_{\psi}(\OO\to\R^d, \F_s,\P)$,
\begin{equation}\label{TT4}
\mathbb W_\psi\big(\mathscr L_{Y^{s,\xi}(t)},\mathscr L_{Y^{s,\eta}(t)}\big)\le h(t-s)\mathbb W_\psi\big(\mathscr L_{\xi},\mathscr L_{\eta}\big).
\end{equation}
\end{enumerate}
Then, if the map $\phi:\triangle \times \R^d\times \OO\to \R^d$ defined via
\begin{equation*}
\phi(t,s,x,\omega)=Y^{s,x}(t,\omega), \quad (t,s)\in\triangle, ~x\in \R^d, ~\omega\in\Omega,
\end{equation*}
further satisfies the following property: for all $(t,s)\in\triangle, x\in \R^d,$   $\omega\in\Omega,$ and some $\tau>0,$
\begin{equation}\label{YYY}
\phi(t+\tau,s+\tau,x,\omega)=\phi(t,s,x,\theta_\tau\omega),
\end{equation}
there exists a unique $($in the sense of law$)$ $\F$-measurable stochastic process $(Y^*(t))_{t\in\R}$ such that for all $t\in\R$ and $h\ge0,$
\begin{equation}\label{T}
Y^*(t+h,\omega)\overset{d}{=}\phi(t+h, t, Y^*(t,\omega),\omega),\qquad Y^*(t+\tau,\omega)\overset{d}{=}Y^*(t,\theta_\tau\omega)
\end{equation}
and   for all $(t,s)\in\triangle$ and $\xi=\xi_s=\xi_0(\theta_{-s}\omega)\in L_\psi(\OO\to\R^d,\F_s,\P)$,
\begin{equation} \label{T***}
\lim_{s\to-\8}Y^{s,\xi}(t)\overset{d}{=}Y^*(t).
\end{equation}
\end{proposition}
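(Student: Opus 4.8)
The plan is to realise $Y^*$ as the \emph{pull-back limit} of the one-point laws and to read off every required identity at the level of distributions. Write $\mu_t^{s,\xi}:=\mathscr L_{Y^{s,\xi}(t)}\in\mathscr P_\psi(\R^d)$. The first goal is to show that, for each fixed $t\in\R$, the family $(\mu_t^{s,\xi})_{s\le t}$ is $\mathbb W_\psi$-Cauchy as $s\to-\8$, that its limit $\mu_t$ is independent of $\xi$, and that $t\mapsto\mu_t$ is $\tau$-periodic and invariant under the flow. Granting this, I realise $Y^*(t)$ as an $\F_t$-measurable random variable with $\mathscr L_{Y^*(t)}=\mu_t$ and check \eqref{T}--\eqref{T***}.

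The Cauchy estimate is the heart of the matter, and I expect it to be the main obstacle. For $s_1\le s_2\le t$, the semi-flow and Markov property $(i)$ give $Y^{s_1,\xi}(t)\overset{d}{=}Y^{s_2,\zeta}(t)$ with $\zeta:=Y^{s_1,\xi}(s_2)$, an $\F_{s_2}$-measurable variable; since $\F_{s_1}\subset\F_{s_2}$, the same $\xi$ serves as an initial datum at time $s_2$. Applying the contraction \eqref{TT4} with starting time $s_2$ yields
$$\mathbb W_\psi\big(\mu_t^{s_1,\xi},\mu_t^{s_2,\xi}\big)\le h(t-s_2)\,\mathbb W_\psi\big(\mathscr L_\zeta,\mathscr L_\xi\big).$$
By the uniform bound $(ii)$ together with the weak triangle inequality \eqref{WU}, the factor $\mathbb W_\psi(\mathscr L_\zeta,\mathscr L_\xi)\le c_0\big(C_0(\xi)+\mathbb W_\psi(\mathscr L_\xi,\delta_{\bf0})\big)=:M(\xi)$ is finite and independent of $s_1,s_2$. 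Since \eqref{L1*} forces in particular $h(t-s_2)\to0$ as $s_2\to-\8$, the right-hand side tends to $0$, so $(\mu_t^{s,\xi})_s$ is $\mathbb W_\psi$-Cauchy and converges by completeness of $(\mathscr P_\psi(\R^d),\mathbb W_\psi)$. Independence of the limit from $\xi$ is immediate from \eqref{TT4}, as $\mathbb W_\psi(\mu_t^{s,\xi},\mu_t^{s,\eta})\le h(t-s)\mathbb W_\psi(\mathscr L_\xi,\mathscr L_\eta)\to0$. The delicate point is that $\mathbb W_\psi$ is only a quasi-metric, so \eqref{WU} must be invoked with care; if one prefers the telescoping variant that sums the increments $\mathbb W_\psi(\mu_t^{s-n\tau_0,\xi},\mu_t^{s-(n+1)\tau_0,\xi})$, it is precisely there that the full summability in \eqref{L1*} is consumed.

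Next I establish the two structural identities for $\mu_t$. For periodicity, the cocycle relation \eqref{YYY} (with $s\mapsto s-\tau$) gives $\phi(t+\tau,s,x,\omega)=\phi(t,s-\tau,x,\theta_\tau\omega)$; since $\P$ is $\theta_\tau$-preserving, $\mathscr L_{\phi(t+\tau,s,x,\cdot)}=\mathscr L_{\phi(t,s-\tau,x,\cdot)}$, and letting $s\to-\8$ yields $\mu_{t+\tau}=\mu_t$. For the flow identity I pass to the limit in $Y^{s,\xi}(t+h)\overset{d}{=}\phi(t+h,t,Y^{s,\xi}(t),\cdot)$: applying \eqref{TT4} with starting time $t$ to the $\F_t$-measurable initial data $Y^{s,\xi}(t)$ and an arbitrary $\F_t$-measurable $\zeta\sim\mu_t$ gives $\mathbb W_\psi\big(\mu_{t+h}^{s,\xi},\mathscr L_{\phi(t+h,t,\zeta,\cdot)}\big)\le h(h)\,\mathbb W_\psi(\mu_t^{s,\xi},\mu_t)\to0$, so $\mu_{t+h}=\mathscr L_{\phi(t+h,t,\zeta,\cdot)}$; here I also use that, by the Markov property, the law of $\phi(t+h,t,\cdot,\cdot)$ depends on its $\F_t$-measurable initial datum only through its distribution.

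Finally I build the process and close uniqueness. Because the Wiener $\sigma$-algebra $\F_0$ is atomless, I realise $Y^*(0)\sim\mu_0$ as an $\F_0$-measurable variable, set $Y^*(t):=\phi(t,0,Y^*(0),\cdot)$ for $t\ge0$, and extend to all of $\R$ by $Y^*(t+n\tau,\omega):=Y^*(t,\theta_{n\tau}\omega)$ for $t\in[0,\tau)$, $n\in\Z$; the measure-preservation of $\theta$ together with \eqref{YYY} makes this assignment consistent and $\F_t$-adapted. The first identity in \eqref{T} then holds in law by the flow identity of the previous paragraph (with $\zeta=Y^*(t)$), the second by construction and the $\theta_\tau$-invariance of $\P$, and \eqref{T***} holds because $\mathscr L_{Y^{s,\xi}(t)}=\mu_t^{s,\xi}\to\mu_t=\mathscr L_{Y^*(t)}$ in $\mathbb W_\psi$, hence in distribution. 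Uniqueness in law is automatic: any process satisfying \eqref{T***} must have marginal law equal to the limit $\mu_t$ at every $t$, so it agrees with $Y^*$ in distribution.
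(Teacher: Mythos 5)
Your proposal is correct and shares the paper's overall architecture (pull-back limit in $(\mathscr P_\psi,\mathbb W_\psi)$, completeness, independence of the limit from the initial datum via \eqref{TT4}, periodicity from \eqref{YYY} and the $\theta_\tau$-invariance of $\P$), but it handles the crucial Cauchy estimate by a genuinely different and in fact shorter route. The paper telescopes: it splits $[s-r,s]$ into blocks of length $\tau_0$, applies the weak triangle inequality \eqref{WU} across the whole sum, invokes the contraction on each block, and then needs the \emph{full summability} $\sum_{j}h(t-s+j\tau_0)\to0$ in \eqref{L1*} to close the estimate. You instead apply the semi-flow property once, writing $Y^{s_1,\xi}(t)=Y^{s_2,\zeta}(t)$ with $\zeta=Y^{s_1,\xi}(s_2)$, use condition $(ii)$ plus a single application of \eqref{WU} to bound $\mathbb W_\psi(\mathscr L_\zeta,\mathscr L_\xi)$ uniformly, and conclude from $h(t-s_2)\to0$ alone (which does follow from \eqref{L1*} by taking the $j=0$ term). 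This buys a cleaner argument that sidesteps an awkwardness in the paper's version, namely that iterating the weak triangle inequality over $\lfloor r/\tau_0\rfloor$ terms with a single constant $c_0$ is not literally what \eqref{WU} gives; the price is that your route does not explain why the hypothesis is phrased as summability rather than mere decay of $h$ — the telescoping variant is what that stronger hypothesis is designed for (e.g.\ if $(ii)$ only controlled one-step increments). Your treatment of the flow identity in \eqref{T} via the contraction at starting time $t$ is if anything more careful than the paper's appeal to a continuous mapping theorem. One small caveat: the explicit extension $Y^*(t+n\tau,\omega):=Y^*(t,\theta_{n\tau}\omega)$ is consistent with $Y^*(t)=\phi(t,0,Y^*(0),\cdot)$ only in law, not pathwise; since every assertion in \eqref{T} and \eqref{T***} is distributional this is harmless (the paper does not construct $Y^*$ explicitly at all), but the word ``consistent'' should be read in that weaker sense.
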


\begin{proof}
By virtue of (ii), the quantity $\mathbb W_\psi\big(\mathscr L_{Y^{s,\xi}(t)},\mathscr L_{Y^{s,\eta}(t)}\big)$  for all $(t,s)\in\triangle$ and $\xi=\xi_s,\eta=\eta_s\in  L_{\psi}(\OO\to\R^d, \F_s,\P)$   is well defined.
If $\mathscr L_{Y^{\cdot,\xi}(t)}$ is a $\mathbb W_\psi$-Cauchy sequence in  $\mathscr P_\psi(\R^{d})$, i.e, for all $\xi=\xi_s,\eta=\eta_s\in  L_{\psi}(\OO\to\R^d, \F_s,\P)$ and $t\in\R$,
\begin{equation}\label{Y}
\lim_{s\to-\8}\sup_{r\ge0}\mathbb W_\psi\big(\mathscr L_{Y^{s-r,\xi}(t)},\mathscr L_{Y^{s,\xi}(t)}\big)=0,
\end{equation}
then  $Y^{s,\xi}(t)$
converges weakly as $s\to -\8$ (see e.g. \cite[Theorem 6.9, p.96]{Vi})
to some $\F$-measurable limiting process, written as $Y^*(t)$, which indeed is independent of $\xi$ by virtue of (iii), and \eqref{T***} follows directly. With the help of (i), we obviously have for all $(t,s)\in\triangle, h\ge0$, $x\in \R^d$ and $\omega\in\Omega,$
$$\phi(t+h, s, x,\omega)=\phi(t+h, t, \phi(t,s,x,\omega),\omega).$$
Whence, by approaching $s\to-\8$ and taking the continuous mapping theorem into account, the first identity in \eqref{T} follows directly.
Moreover, with the aid of \eqref{YYY}, we then similarly  obtain the second identity in \eqref{T}.

Based on the analysis above, to complete the proof of Proposition \ref{pr},
 it suffices to prove \eqref{Y}. To this end,
 we find that  for all $\xi=\xi_s\in L_\psi(\OO\to \R^d, \F_s,\P)$ and $r\ge0$,
\begin{equation}\label{Y*Y}
\begin{split}
\mathbb W_\psi\big(\mathscr L_{Y^{s-r,\xi}(t)},\mathscr L_{Y^{s,\xi}(t)}\big)&\le c_0\sum_{j=0}^{\lfloor r/\tau_0\rfloor}\mathbb W_\psi\big(\mathscr L_{Y^{s-(((j+1)\tau_0)\wedge r),\xi}(t)},\mathscr L_{Y^{s-j\tau_0,\xi}(t)}\big)\\
 &=c_0\sum_{j=0}^{\lfloor r/\tau_0\rfloor}\mathbb W_\psi\big(\mathscr L_{Y^{s-j\tau_0,Y^{s-(((j+1)\tau_0)\wedge r),\xi}(s-j\tau_0)}(t)},\mathscr L_{Y^{s-j\tau_0,\xi}(t)}\big)\\
 &\le c_0\sum_{j=0}^{\lfloor r/\tau_0\rfloor}h(t-s+j\tau_0)\mathbb W_\psi\big(\mathscr L_{Y^{s-(((j+1)\tau_0)\wedge r),\xi}(s-j\tau_0)},\mathscr L_{\xi}\big)\\
 &\le c_0\sum_{j=0}^\8h(t-s+j\tau_0)\sup_{r\in\R}\sup_{t\ge r }\mathbb W_\psi\big(\mathscr L_{Y^{r,\xi}(t)},\mathscr L_{\xi}\big),
\end{split}
\end{equation}
where in the first inequality we employed the weak triangle inequality due to \eqref{WU},   in the  identity we exploited the flow property, and the second inequality is owing to
 the contractive property in (iii). Thus, \eqref{Y} follows from \eqref{Y*Y} and by taking \eqref{L1*} into consideration.
\end{proof}

\begin{remark}
The condition (ii) in Proposition \ref{pr} is related to the uniform boundedness in the  sense of mean square for $\psi(Y^{s,\xi}(t),{\bf0})$, and moreover (iii) shows the contractive property of transition kernels of the stochastic process $Y^{s,\xi}(t)$ starting from   different initial distributions. So  far, there are plenty of probabilistic approaches (e.g. coupling methods) to examine (iii); see e.g. the monograph \cite{Chen}.  Obviously, the condition \eqref{L1*} is valid when the function $h$ is exponentially decay. Indeed, the condition \eqref{L1*}   allows   the function $h$ to be sub-exponentially decay in some setting.
\end{remark}

With the Proposition \ref{pr} at hand, we are in a position to complete the proof of Theorem \ref{main2}.
\begin{proof}[Proof of Theorem \ref{main2}]
To finish the proof of Theorem \ref{main2}, it is sufficient to examine the assumptions in Proposition \ref{pr} with suitable distance-like function $\psi$ (which is to be chosen later), one-by-one.
Under $({\bf A})$, in terms of \cite[Theorem 3.1.1.]{PR}, \eqref{H*} has a unique strong solution  $(X^{s,x}(t))_{t\ge s}$
 so that the Assumption (i) in Proposition \ref{pr}
is verifiable and therefore the stochastic semi-flow $\phi$ generated by the solution process  satisfies the property \eqref{YYY}; see e.g. \cite[p.34]{luo2014} for more details.

Below, we define  $P_{s,t}^*\mu=\mathscr L_{X^{s,\xi}(t)}$ if the $\F_s$-measurable random variable $\xi\sim\mu\in\mathscr P(\R^d)$. Once we can show that there exist constants $C,\ll>0$ such that   for all $(t,s)\in\triangle$ and $\mu,\nu\in\mathscr P_1(\R^{d})$,
\begin{equation}\label{HH*}
  \mathbb W_1\big(P_{s,t}^*\mu,P_{s,t}^*\nu\big)\le C\,\e^{-\ll \lfloor (t-s)/\tau\rfloor }\mathbb W_1\big(\mu,\nu\big),
\end{equation}
then the assumption (iii) in Proposition \ref{pr} follows directly with $\psi(x,y)=|x-y|$ and  $h(t)=C\e^\ll\e^{-\frac{\ll t}{\tau} }, t\ge0.$

Provided that  \eqref{HH*} is available,  we deduce that
\begin{equation*}\label{jj}
\begin{split}
\mathbb W_1\big(P_{s,t}^*\mu,\mu\big)
&\le  \sum_{i=0}^{\lfloor (t-s)/\tau\rfloor}\mathbb W_1\big(P_{s,(s+(i+1)\tau)\wedge t}^*\mu,P_{s,s+i\tau}^*\mu\big)\\
&=\sum_{i=0}^{\lfloor (t-s)/\tau\rfloor}\mathbb W_1\big(P_{s,s+i\tau}^*P_{s+i\tau,((s+(i+1)\tau))\wedge t}^*\mu,P_{s,s+i\tau}^*\mu\big)\\
&\le C\sum_{i=0}^{\lfloor (t-s)/\tau\rfloor}\e^{-\ll i\tau}\mathbb W_1\big(P_{s+i\tau,((s+(i+1)\tau))\wedge t}^*\mu,\mu\big)\\
&\le  \frac{C}{1-\e^{-\ll\tau}}\sup_{r\in[0,\tau]}\mathbb W_1\big(P_{s,  s+r}^*\mu,\mu\big),
 \end{split}
\end{equation*}
where in the first inequality
we used the triangle inequality,   in the identity we utilized the semigroup property of $P_{s,t}^*$, and  in the second inequality we applied \eqref{HH*}. Hence, to check the assumption (ii) in Proposition \ref{pr}, it is sufficient to show that
\begin{equation}\label{KK}
\sup_{r\in[0,\tau]}\mathbb W_1\big(P_{s,  s+r}^*\mu,\mu\big)<\8.
\end{equation}
By the triangle inequality, it follows that for $\xi\in L^1(\OO\to\R^d,\F_s,\P)$,
\begin{equation*}
   \sup_{r\in[0,\tau]}\mathbb W_1\big(P_{s,  s+r}^*\mu,\mu\big)\le  \sup_{r\in[0,\tau]}\E|X_{s+r}^{s,\xi}|+\int_{\R^d}|x|\,\mu(\d x).
\end{equation*}
Thus, \eqref{KK} follows by noting that for some $C_0>0,$
\begin{equation}\label{j4}
\sup_{r\in[0,\tau]}\E|X_{s+r}^{s,\xi}|\le C_0(1+\E|\xi|),
\end{equation}
which is a  more or less standard estimate under the Assumption ({\bf A}); see, for example, \cite{Mao}.

Set
$$\varphi(r):=\int_0^r\e^{-\int_0^u\gamma(v)\,\d v}\,\d u\int_u^\8l\e^{\int_s^l\,\gamma(v)\,\d v}\,\d l,\qquad r\ge 0,$$
with \begin{equation}\label{3j}
\gamma(v):=(K_1+K_2)v\,{\bf 1}_{\{0\le r\le L\}}-K_2v.\end{equation}
Apparently,  we have
\begin{equation}\label{6j}
\varphi'(r)=\e^{-\int_0^r\gamma(v)\,\d v}\int_r^\8l\e^{\int_0^l\,\gamma(v)\,\d v}\,\d l,\quad \varphi''(r)+\gamma(r)\varphi'(r)=-r,\quad r\ge 0.
\end{equation}
According to the expression of $\gamma(\cdot)$, it is easy to see that
\begin{equation*}
  0<C_*:=\inf_{r\ge 0}\varphi'(r)  \le \sup_{r\ge 0}\varphi'(r):=C^*<\8.
\end{equation*}
Then, by the mean value theorem, besides $\varphi(0)=0$, we find that
\begin{equation}\label{7j}
C_*r\le \varphi(r)\le C^*r,\qquad r\ge0.
\end{equation}
This definitely  implies
\begin{equation}\label{7jj}
C_*\mathbb W_1\le \mathbb W_\psi\le C^*\mathbb W_1,
\end{equation}
where $\psi(x,y):=\varphi(|x-y|), x,y\in\R^d.$
Whence, to derive \eqref{HH*}, it is sufficient to prove that
there exist constants $C_0,\ll>0$ such that   for all $(t,s)\in\triangle$ and $\mu,\nu\in\mathscr P_\psi(\R^{d})$,
\begin{equation}\label{eqn:HH*2}
  \mathbb W_\psi\big(P_{s,t}^*\mu,P_{s,t}^*\nu\big)\le C_0\,\e^{-\ll \lfloor (t-s)/\tau\rfloor }\mathbb W_\psi\big(\mu,\nu\big).
\end{equation}

Below, we adopt the reflection coupling strategy (see, for instance, \cite{LW} for time-homogeneous SDEs) to derive \eqref{eqn:HH*2} concerning with the time-periodic SDE \eqref{H*}.
Let
\begin{equation*}
\Pi(x)=I_{d\times d}-  \frac{2xx^*}{|x|^2} ,\qquad x\neq{\bf0},
\end{equation*}
where $I_{d\times d}$ means the $d\times d$ identity matrix, and $x^*$ denotes the transpose of $x.$ Obviously, for $x\neq{\bf0}$, $\Pi(x)$ is an orthogonal matrix.
Consider the following auxiliary SDE
\begin{equation}\label{H***}
 \d Y^{s,\eta}(t)=b(t,Y^{s,\eta}(t))\,\d t+\ss{\aa(t)}\,\Pi(X^{s,\xi}(t)-Y^{s,\eta}(t))\, \d W(t),\qquad s\le t<T_s,
\end{equation}
where $T_s$ is the coupling time defined by
$$T_s=\inf\{t\ge s: X^{s,\xi}(t)=Y^{s,\eta}(t)\}.$$
When $t\ge T_s$, we  stipulate  $X^{s,\xi}(t)=Y^{s,\eta}(t)$ based on the strong wellposedness of \eqref{H*}. For notation abbreviation, set $Z(t):=X^{s,\xi}(t)-Y^{s,\eta}(t)$.
From \eqref{H*} and \eqref{H***}, we have
\begin{equation*}\label{j}
\d Z(t)=\big(b(t,X^{s,\xi}(t))-b(t,Y^{s,\eta}(t))\big)\, \d t+2\ss{\aa(t)}\,\frac{Z(t)}{|Z(t)|} \d W^s(t),\qquad s\le t<T_s,
\end{equation*}
where
$$W^s(t):=\int_s^t\< Z(r)^*/|Z(r)|,\d W(r)\>$$
is a Brownian motion
via L\'evy's characterizations; see e.g.
\cite[Lemma 10.15, p.291]{DZ} for more details.
Thus, by noting that for $x\neq{\bf 0}$,
\begin{equation*}
\nn\varphi(|x|)=\varphi'(|x|)\frac{x}{|x|},\qquad \nn^2\varphi(|x|)=\varphi''(|x|)\frac{xx^*}{|x|^2}+\varphi'(|x|)\bigg(\frac{1}{|x|}I_{d\times d}-\frac{xx^*}{|x|^3}\bigg)
\end{equation*}
followed by
applying It\^o's formula, we derive that
\begin{equation}\label{j1}
\begin{split}
&\d \Big(\e^{\frac{1}{C^*}\int_s^t\aa(r)\,\d r}\varphi(|Z(t)|)\Big)\\
&=\e^{\frac{1}{C^*}\int_s^t\aa(r)\,\d r}\Big(\frac{\aa(t)}{C^*}\varphi(|Z(t)|)+\varphi'(|Z(t)|)\frac{1}{|Z(t)|}\<Z(t), b(t,X^{s,\xi}(t))-b(t,Y^{s,\eta}(t))\>\\
&\quad+2\aa(t)\varphi''(|Z(t)|)\Big)\,\d t+\d M^s(t)\\
&\le \aa(t)\e^{\frac{1}{C^*}\int_s^t\aa(r)\,\d r}\Big(\frac{1}{C^*}\varphi(|Z(t)|)-|Z(t)|\Big)\,\d t+\d M^s(t)\\
&\le \d M^s(t),\qquad s\le t<T_s,
\end{split}
\end{equation}
with $\d M^s(t):=\e^{\int_s^t\alpha(r)\,\d r}\ss{\varphi(t)}\,\d W^s(t)$,
where the first inequality is due to \eqref{HH} and \eqref{6j}
and the second inequality holds true thanks to \eqref{7j}. Thus, integrating from $s$ to $t\wedge T_s$ followed by taking expectations on both sides of \eqref{j1} yields
\begin{equation*}
  \E\Big(\e^{\frac{1}{C^*}\int_s^{t\wedge T_s}\aa(r)\,\d r}\varphi(|Z(t\wedge T_s)|)\Big)  \le \E\varphi(|\xi-\eta|).
\end{equation*}
This, along with the fact that
$$\E\Big(\e^{\frac{1}{C^*}\int_s^{t\wedge T_s}\aa(r)\,\d r}\varphi(|Z(t\wedge T_s)|)\Big)= \e^{\frac{1}{C^*}\int_s^{t}\aa(r)\,\d r}\E \varphi(|Z(t) |)  $$
in view of $\varphi(0)=0$, yields
$$\mathbb W_{\psi}\big(P_{s,t}^*\mu,P_{s,t}^*\nu\big)\le \E \varphi(|Z(t) |) \le \e^{-\frac{1}{C^*}\int_s^{t}\aa(r)\,\d r}\E\varphi(|\xi-\eta|).$$
Consequently, \eqref{HH*} follows from Lemma \ref{lem0} below and by choosing random variables $\xi,\eta\in L_{\psi}(\OO\to\R^d, \F_s,\P)$ such that $\mathbb W_\psi\big(\mu,\nu\big)=\E\varphi(|\xi-\eta|)$, which is true in terms of existence of optimal coupling.
\end{proof}

\section{Proof of Theorem \ref{thm:main1}}\label{sec:existence}
In this section, we aim to complete the proof of Theorem \ref{main2}. To achieve this, we first prepare  some warm-up work. More precisely, we intend to demonstrate that the functional solution  to \eqref{eq:SPDE} is uniformly bounded and is continuous w.r.t. the initial value
in the mean-square sense.

First of all, we prepare the following fundamental fact concerned with periodic functions.
\begin{lemma}\label{lem0}
Assume that $f:\R\to\R$ is a  $\tau$-periodic function. Then, for any $t\ge s$,
\begin{equation}\label{W1}
  \int_s^{t}f (u)\,\d u= \lfloor (t-s)/\tau\rfloor\int_0^\tau f(u)\,\d u+\int_{s-\lfloor s/\tau\rfloor \tau}^{t-(\lfloor s/\tau\rfloor+\lfloor (t-s)/\tau\rfloor) \tau}f(u)\,\d u,
\end{equation}
where, for a real number $a$, $\lfloor a\rfloor$ stands for its integer part.
\end{lemma}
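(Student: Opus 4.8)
The plan is to decompose the interval $[s,t]$ into a whole number of full periods plus a single leftover piece, and to exploit the basic shift invariance
\[
\int_a^b f(u)\,\d u=\int_{a+k\tau}^{b+k\tau}f(u)\,\d u,\qquad a\le b,\ k\in\Z,
\]
which holds by $\tau$-periodicity of $f$ (substitute $u\mapsto u+\tau$ to get the case $k=1$, then iterate). Writing $m:=\lfloor s/\tau\rfloor$ and $n:=\lfloor (t-s)/\tau\rfloor$, I would first split
\[
\int_s^t f(u)\,\d u=\int_s^{s+n\tau}f(u)\,\d u+\int_{s+n\tau}^{t}f(u)\,\d u.
\]

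For the first summand I would observe that $[s,s+n\tau]$ consists of exactly $n$ consecutive periods, so
\[
\int_s^{s+n\tau}f(u)\,\d u=\sum_{k=0}^{n-1}\int_{s+k\tau}^{s+(k+1)\tau}f(u)\,\d u=n\int_0^\tau f(u)\,\d u,
\]
where each term equals $\int_0^\tau f$ by the shift invariance (translating $[s+k\tau,s+(k+1)\tau]$ back to $[0,\tau]$ via $k_0=-\lfloor(s+k\tau)/\tau\rfloor$, or simply noting consecutive periods all carry the same integral). This produces exactly the first term $\lfloor (t-s)/\tau\rfloor\int_0^\tau f(u)\,\d u$ on the right-hand side of \eqref{W1}.

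For the leftover $\int_{s+n\tau}^t f(u)\,\d u$ I would apply the shift invariance once more with $k=-(m+n)$, translating the interval by $-(m+n)\tau$:
\[
\int_{s+n\tau}^t f(u)\,\d u=\int_{s+n\tau-(m+n)\tau}^{\,t-(m+n)\tau}f(u)\,\d u=\int_{s-\lfloor s/\tau\rfloor\tau}^{\,t-(\lfloor s/\tau\rfloor+\lfloor(t-s)/\tau\rfloor)\tau}f(u)\,\d u,
\]
which is precisely the second term in \eqref{W1}. Adding the two contributions yields the claimed identity.

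I do not expect a genuine obstacle here: the content reduces entirely to the periodicity-driven shift invariance, and the remaining work is bookkeeping, namely checking that the two floor functions $m$ and $n$ combine correctly in the endpoints of the remainder integral. The one point worth isolating cleanly is the invariance lemma itself; once that is stated, the splitting and the single translation by $-(m+n)\tau$ close the argument. It is also worth remarking that the particular normalization $m=\lfloor s/\tau\rfloor$ is what forces the lower endpoint $s-m\tau$ into $[0,\tau)$, but the identity as written holds for the stated endpoints irrespective of this choice.
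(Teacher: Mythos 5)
Your proposal is correct and follows essentially the same route as the paper: both arguments rest on the shift invariance $\int_a^b f=\int_{a+k\tau}^{b+k\tau}f$ coming from $\tau$-periodicity, extract $\lfloor (t-s)/\tau\rfloor$ full periods (the paper splits $[s,t]$ into three pieces and recombines two of them into $\int_0^{n\tau}f$, whereas you split at $s+n\tau$ and count the periods directly), and then translate the leftover interval by $-(\lfloor s/\tau\rfloor+\lfloor (t-s)/\tau\rfloor)\tau$. The only cosmetic slip is your parenthetical choice $k_0=-\lfloor(s+k\tau)/\tau\rfloor$, which does not map $[s+k\tau,s+(k+1)\tau]$ onto $[0,\tau]$ exactly; the alternative justification you give (the integral of a $\tau$-periodic function over any interval of length $\tau$ equals $\int_0^\tau f$) is the right one and closes the argument.
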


\begin{proof}
By virtue of the $\tau$-periodic property of the function $f$, it is easy to see that for any $t\ge s$,
 \begin{equation*}
\begin{split}
  \int_s^t f (u)\,\d u  &=\int_s^{\lfloor (t-s)/\tau\rfloor\tau} f(u)\,\d u+\int_{\lfloor (t-s)/\tau\rfloor\tau}^{s+\lfloor (t-s)/\tau\rfloor\tau} f(u)\,\d u\\
  &\quad+\int_{s+\lfloor (t-s)/\tau\rfloor\tau}^t f \big(u-(\lfloor s/\tau\rfloor+\lfloor (t-s)/\tau\rfloor) \tau\big)\,\d u\\
  &=\int_s^{\lfloor (t-s)/\tau\rfloor\tau} f(u)\,\d u+\int_0^sf(u)\,\d u+\int_{s-\lfloor s/\tau\rfloor{\tau}}^{t-(\lfloor s/\tau\rfloor+\lfloor (t-s)/\tau\rfloor) \tau}f(u)\,\d u\\
  &=\int_0^{\lfloor (t-s)/\tau\rfloor\tau} f(u)\,\d u+\int_{s-\lfloor s/\tau\rfloor{\tau}}^{t-(\lfloor s/\tau\rfloor+\lfloor (t-s)/\tau\rfloor) \tau}f(u)\,\d u\\
  &=\lfloor (t-s)/\tau\rfloor\int_0^\tau f(u)\,\d u+\int_{s-\lfloor s/\tau\rfloor{\tau}}^{t-(\lfloor s/\tau\rfloor+\lfloor (t-s)/\tau\rfloor) \tau}f(u)\,\d u.
\end{split}
\end{equation*}
Therefore, \eqref{W1} follows right now.
\end{proof}

Next, we recall the following  Burkholder-Davis-Gundy (BDG for short) inequality for continuous martingales due to Osekowski \cite{Os}, where the sharp upper BDG's constant (i.e., $\chi$ below) is crucial in providing sufficient conditions to guarantee existence of random periodic solutions.

\begin{lemma}\label{ose}
  For any continuous martingale $M$ satisfying $M(0)=0$ and for any $t \ge 0$,
  $$
\E\bigg( \sup_{0 \le s \le t} M(s)\bigg)\le \chi\, \E\big( [M,M]_t^{1/2} \big),
$$
where $\chi\approx1.30693$  is the smallest positive root of the confluent hypergeometric function with parameter 1, and $[M,M]_t$ means the quadratic variation of $M(t)$.
\end{lemma}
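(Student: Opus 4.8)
The plan is to follow Osekowski's special-function (Bellman function) method for sharp martingale inequalities, reducing first to a one-dimensional problem for Brownian motion. By the Dambis--Dubins--Schwarz theorem, any continuous martingale $M$ with $M(0)=0$ can be written as a time change $M(t)=B([M,M]_t)$ of a standard Brownian motion $B$, so that $\sup_{0\le s\le t}M(s)=\sup_{0\le u\le\sigma}B(u)$ with $\sigma:=[M,M]_t$ a stopping time for $B$. Hence it suffices to prove
$$\E\Big(\sup_{0\le u\le\sigma}B(u)\Big)\le\chi\,\E\sqrt{\sigma}$$
for every stopping time $\sigma$, a much cleaner statement to attack.

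Next I would search for a special function $U(x,y,t)$ of the running value $x=B(u)$, the running maximum $y=\sup_{v\le u}B(v)$, and the clock $t=u$, subject to three requirements: (a) the majorization $U(x,y,t)\ge y-\chi\sqrt{t}$ on the admissible region $\{x\le y\}$; (b) the initial normalization $U(0,0,0)\le0$; and (c) a supermartingale condition, namely $\partial_tU+\tfrac12\partial_{xx}U\le0$ in the interior together with a Neumann-type inequality $\partial_yU\le0$ along the diagonal $x=y$, where the running maximum can increase. Granting such a $U$, It\^o's formula shows that $U(B(u),\sup_{v\le u}B(v),u)$ is a supermartingale, and optional stopping yields $\E\big(\sup_{u\le\sigma}B(u)-\chi\sqrt{\sigma}\big)\le\E\,U(B(\sigma),\sup_{u\le\sigma}B(u),\sigma)\le U(0,0,0)\le0$, which is precisely the asserted bound.

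The heart of the matter is constructing $U$ and pinning down the constant $\chi$. Brownian scaling forces $U$ to be positively homogeneous of degree one in $(x,y,\sqrt{t}\,)$, so the backward heat equation $\partial_tU+\tfrac12\partial_{xx}U=0$ collapses to a second-order ordinary differential equation in the self-similar variable $x/\sqrt{t}$, whose solutions are expressed through confluent hypergeometric (parabolic cylinder) functions. Imposing the smooth-fit and Neumann conditions at the free boundary $x=y$ fixes the separation constant, and the extremal value emerges exactly as the smallest positive root of the confluent hypergeometric function with parameter $1$, giving $\chi\approx1.30693$. Sharpness is then confirmed by exhibiting a family of stopping times (obtained from the extremal Brownian trajectories dictated by the equality case of $U$) along which the inequality is asymptotically saturated.

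I expect the main obstacle to lie in steps (a)--(c): writing the explicit Bellman function, establishing its limited regularity (typically $U$ is only $C^1$ across the diagonal $x=y$, so the application of It\^o's formula must be justified through an It\^o--Tanaka/smooth-fit argument and a localization), and verifying the differential inequalities on the whole admissible region. Recognizing that the relevant boundary datum is governed by a confluent hypergeometric equation, and that $\chi$ is precisely its least positive root, is the delicate analytic point; by contrast the purely probabilistic ingredients (the time change, It\^o's formula, optional stopping, and the passage to the limit) are routine.
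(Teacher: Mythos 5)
First, a point of calibration: the paper does not prove this lemma at all. It is stated as a quotation of a known sharp maximal inequality and attributed directly to Osekowski's article \emph{Sharp maximal inequalities for the martingale square bracket}, so there is no in-paper argument to compare yours against; the only honest benchmark is whether your proposal would reconstruct Osekowski's proof. Your roadmap is the right one and matches the known strategy: the Dambis--Dubins--Schwarz reduction to $\E\big(\sup_{u\le\sigma}B(u)\big)\le\chi\,\E\sqrt{\sigma}$ for stopping times $\sigma$ of Brownian motion, the Bellman function $U(x,y,t)$ with majorization, normalization and supermartingale properties, the degree-one homogeneity in $(x,y,\sqrt{t}\,)$ forced by Brownian scaling, and the appearance of $\chi$ as the least positive zero of a confluent hypergeometric function via the resulting self-similar ODE. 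This is genuinely how such sharp constants are obtained (going back to Davis's work on best constants for Brownian motion), so the architecture is sound.

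That said, as a proof the proposal has a real gap, and you have identified it yourself: the special function $U$ is never written down, the differential inequalities $\partial_tU+\tfrac12\partial_{xx}U\le0$ and $\partial_yU\le0$ on the diagonal are never verified, and the claim that the free-boundary conditions single out precisely the smallest positive root of the confluent hypergeometric function with parameter $1$ is asserted rather than derived. Those verifications are the entire mathematical content of the result; everything you do carry out (time change, It\^o, optional stopping) is, as you say, routine. Two further technical points would need care even in a fleshed-out version: the DDS reduction requires either that $[M,M]_\infty=\infty$ or an enlargement of the probability space to accommodate a genuine Brownian motion, and the optional stopping step needs a localization/uniform integrability argument before you may conclude $\E\,U(B(\sigma),\sup_{u\le\sigma}B(u),\sigma)\le U(0,0,0)$ for an arbitrary stopping time with $\E\sqrt{\sigma}<\8$. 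In the context of this paper the appropriate resolution is simply to cite Osekowski's theorem, as the authors do; if you intend to supply a self-contained proof, the explicit construction and verification of $U$ cannot be deferred.
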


\begin{lemma}\label{lem:boundedness1}
Assume Assumption  $({\bf H})$ and    condition \eqref{WE}.
Then, for any $s\in\R$ and $\xi=\xi_s\in L^2(\OO\to\C,\F_s,\P)$, there exists a constant $C>0$ independent of $s$ and  $\xi$ such that
\begin{equation} \label{E11}
   \sup_{s\in\R} \sup_{t\geq s} \E \|X_{t}^{s,\xi}\|_{\infty}^2\le C\big(1+\E\|\xi\|_\8^2\big).
\end{equation}
\end{lemma}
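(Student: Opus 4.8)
The plan is to run Itô's formula on $|X(t)|^2$ with an exponential integrating factor that neutralises the leading coefficient $\ll_1$, and then to pass from a pointwise‑in‑time $L^2$ estimate to the window–supremum estimate \eqref{E11} by means of the Burkholder–Davis–Gundy inequality of Lemma \ref{ose}. The sharp constant $\chi$ and the quantities $c_*(r_0,\tau),c^*(r_0,\tau)$ appearing in \eqref{WE}–\eqref{WW} are precisely the book‑keeping data produced by this passage: $c_*,c^*$ record how the integrating factor distorts when one slides across a delay window of length $r_0$, and $\chi$ enters when $\E\sup$ of the stochastic integral over the window is bounded by its quadratic variation.

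First I would turn the difference hypotheses \eqref{eqn:A1}–\eqref{eqn:A2} into self‑bounds on $b$ and $\si$. Taking $\eta=\mathbf 0$ (the constant path at $\mathbf 0$) and noting that $t\mapsto b(t,\mathbf 0)$ and $t\mapsto \si(t,\mathbf 0)$ are continuous and $\tau$‑periodic, hence uniformly bounded, Young's inequality yields a constant $C_0>0$ \emph{independent of $s$} with
$$2\langle \xi(0), b(t,\xi)\rangle + \|\si(t,\xi)\|_{\rm HS}^2 \le \ll_1(t)|\xi(0)|^2 + \big(\ll_2(t)+\ll_3(t)\big)\|\xi\|_\8^2 + C_0, \qquad \xi\in\C,\ t\in\R,$$
the only cost being an arbitrarily small extra multiple of $|\xi(0)|^2$ from the cross term $2\langle\xi(0),b(t,\mathbf 0)\rangle$, which the \emph{strict} inequality $\ell<0$ leaves room to absorb. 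Here the drift contributes the $\ll_2$ part and the Itô variance $\|\si\|_{\rm HS}^2$ contributes the $\ll_3$ part, matching the non‑$\chi$ terms in \eqref{WE}.

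Next comes the weighted Itô step. With $\Phi(t)=\exp\!\big(-\int_s^t \ll_1(u)\,\d u\big)$, applying Itô's formula to $\Phi(t)|X(t)|^2$ cancels the $\ll_1|X|^2$ contribution and gives, for every $t\ge s$,
$$\Phi(t)|X(t)|^2 \le \|\xi\|_\8^2 + \int_s^t \Phi(u)\big[(\ll_2(u)+\ll_3(u))\|X_u\|_\8^2 + C_0\big]\,\d u + M(t),$$
where $M(t)=\int_s^t 2\Phi(u)\langle X(u),\si(u,X_u)\,\d W(u)\rangle$ is a continuous martingale. Multiplying by $\Phi(v)^{-1}$, taking the supremum over $v$ in a delay window $[t-r_0,t]$ and then expectations, I would bound the window ratios $\Phi(v)^{-1}\Phi(u)=\exp(\int_u^v\ll_1)$ by the factors $e^{-c_*}$ and $e^{c^*}$ of \eqref{WW}, and estimate $\E\sup_v M(v)$ through Lemma \ref{ose}; the quadratic variation $\E[M,M]$ reproduces a further $\ll_3(u)\|X_u\|_\8^2$ contribution carrying the constant $\chi^2$, which is exactly the $2\ll_3\chi^2 e^{-c_*+2c^*}$ term in \eqref{WE}. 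This closes into a Volterra/Gronwall inequality for $m(t):=\E\|X_t\|_\8^2$ whose kernel coefficient, integrated over one period, is dominated by $\ell$.

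The main obstacle is the self‑referential nature of this inequality: the window–supremum $\E\|X_u\|_\8^2$ sits on both sides, so one must show the feedback is contractive \emph{on average}. This is where Lemma \ref{lem0} is decisive: over $[s,t]$ the integrating factor accumulates $\lfloor(t-s)/\tau\rfloor\int_0^\tau\ll_1(u)\,\d u + O(1)$, so the per‑period multiplier governing $m$ is $e^{\ell}<1$ up to a bounded correction that is independent of $s$. Iterating over consecutive periods (equivalently, running Gronwall against the strictly negative average drift) then produces $m(t)\le C\,\e^{-\gamma(t-s)}\E\|\xi\|_\8^2 + C$ with $C,\gamma>0$ independent of $s$ and $\xi$, whence \eqref{E11}. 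The two delicate points to monitor are that every constant stays independent of the initial time $s$ — guaranteed by the $\tau$‑periodicity of the coefficients — and that the Young and BDG constants are chosen so the period‑integrated coefficient remains strictly below the threshold encoded by $\ell<0$.
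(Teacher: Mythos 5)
Your proposal follows essentially the same route as the paper's proof: Young-type self-bounds obtained from \eqref{eqn:A1}--\eqref{eqn:A2} with $\eta=\mathbf 0$, It\^o's formula with the integrating factor $\exp(-\int_s^t\ll_1)$, passage to the window supremum with the $c_*(r_0,\tau)$ and $c^*(r_0,\tau)$ corrections, the sharp BDG constant $\chi$ from Lemma \ref{ose} with half of $\E\|X_t^s\|_\8^2$ absorbed on the left, and finally Gronwall combined with Lemma \ref{lem0} and the strictness of \eqref{WE}. The only cosmetic difference is that the paper carries the small parameter $\vv$ explicitly inside the integrating factor via \eqref{E10}, which you describe equivalently as absorbing the cross terms into the slack of $\ell<0$.
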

\begin{proof}
In terms of \eqref{WE}, there exists an $\vv\in(0,1)$ sufficiently small such that
\begin{equation} \label{E10}
   \int_0^\tau\big(\ll_1(r)+\vv  +2\e^{-c_*(r_0,\tau)+\vv r_0}(\ll_2(r)+\ll_3(r)+\vv+2\ll_3(r) \chi^2\e^{-c_*(r_0,\tau)+2(c^*(r_0,\tau)+\vv r_0)})\big)\,\d r<0.
\end{equation}
In the sequel, we shall fix $\vv>0$ satisfying \eqref{E10}.
According to \eqref{eqn:A1} and \eqref{eqn:A2}, for any $\xi\in\mathscr C$ it follows that
\begin{align}\label{E4}
\begin{split}
    2\langle b(t,\xi),\xi(0)\rangle
    &=2\langle b(t,\xi)-b(t,{\bf0}),\xi(0)\rangle+2\langle b(t,{\bf0}),\xi(0)\rangle\\
    &\leq \lambda_1(t) |\xi(0)|^2+\lambda_2(t) \|\xi\|^2_\infty+2|b(t,{\bf0})|\cdot|\xi(0)|\\
    &\leq (\lambda_1(t)+\vv) |\xi(0)|^2+\lambda_2(t) \|\xi\|^2_\infty+|b(t,{\bf0})|^2/\vv,
\end{split}
\end{align}
and that
\begin{align}\label{E5}
    \begin{split}
           \| \si(t,\xi)\|^2_{\rm HS}
    &=\| \si(t,\xi)-\si(t,{\bf0})\|^2_{\rm HS}+2\langle \si(t,\xi)-\si(t,{\bf0}),\si(t,{\bf0})\rangle_{\rm HS}+\| \si(t,{\bf0})\|^2_{\rm HS}\\
 &\le \Big(1+\frac{\vv}{\ll_3(t)}\Big) \| \si(t,\xi)-\si(t,{\bf0})\|^2_{\rm HS}+(1+\ll_3(t)/\vv)\| \si(t,{\bf0})\|_{\rm HS}^2\\
    &\leq (\lambda_3(t)+\vv) \|\xi\|^2_\8+(1+\ll_3(t)/\vv) \| \si(t,{\bf0})\|^2_{\rm HS}
    \end{split}
\end{align}
by exploiting the non-negative property of the function $\ll_3(\cdot)$.

Below, for notation brevity we
shall write $X^s_t$ and $X^s(t)$ instead of $X^{s,\xi}_t$ and $X^{s,\xi}(t)$, respectively.
Applying It\^o's formula  yields
\begin{align}\label{eqn:Ito1}
    \begin{split}
     \d\Big(\e^{-\int_s^t(\ll_1 (r)+\vv)\,\d r}|X^s(t)|^2\Big)&= \e^{-\int_s^t(\ll_1 (r)+\vv)\,\d r} \big(-(\ll_1 (t)+\vv) |X^s(t)|^2+2\langle X^s(t), b(t,X^s_t)\rangle\\ &\quad+\|\si(t,X^s_t\|^2_{\rm HS}\big)\,\d t+\d M(t),
\end{split}
\end{align}
in which
$$\d M^s(t):=2\e^{-\int_s^t(\ll_1 (r)+\vv)\,\d r}\langle X^s(t),\si(t,X^s_t) \mathrm{d}W(t)\rangle.$$
Then, combining \eqref{E4} with \eqref{E5},
we infer from \eqref{eqn:Ito1} that
\begin{equation}\label{E6}
\begin{split}
    \d\Big(\e^{-\int_s^t(\ll_1 (r)+\vv)\,\d r}|X^s(t)|^2\Big)&\leq \e^{-\int_s^t(\ll_1 (r)+\vv)\,\d r}((\lambda_2(t)+\ll_3(t)+\vv)\|X^s_t\|^2_\infty+C_\vv(t)) \, \d t\\
    &\quad+\d M^s(t),
    \end{split}
\end{equation}
where
\begin{equation}\label{A4}
C_\vv(t):=|b(t,{\bf0})|^2/\vv+(1+\ll_3(t)/\vv) \| \sigma(t,{\bf0})\|^2_{\rm HS}, \qquad t\ge s.
\end{equation}
Observe that
\begin{equation} \label{A1}
\begin{split}
   \e^{-\int_s^t(\ll_1 (r)+\vv)\,\d r}\|X_t^s\|^2_\infty
   &=\sup_{-r_0\le \theta\le 0}\Big(\e^{-\int_s^{t+\theta}(\ll_1 (r)+\vv)\,\d r}|X^s(t+\theta)|^2\e^{-\int_{t+\theta}^{t}(\ll_1 (r)+\vv)\,\d r}\Big)\\
    &\leq \e^{-c_*(r_0,\tau)}\sup_{t-r_0\le r \le t}\Big(\e^{-\int_s^r(\ll_1 (u)+\vv)\,\d u}|X^s(r)|^2 \Big)\\
    &=\e^{-c_*(r_0,\tau)}\sup_{ (t-r_0)\vee s\le r\le t}\Big(\e^{-\int_s^r(\ll_1 (u)+\vv)\,\d u}|X^s(r)|^2 \Big)\\
    &\qquad\vee\bigg( \e^{-c_*(r_0,\tau)}\sup_{ t-r_0\le r \le  (t-r_0)\vee s}\Big(\e^{-\int_s^r(\ll_1 (u)+\vv)\,\d u}|X^s(r)|^2 \Big)\bigg)\\
    &=:I_1(t)\vee I_2(t),
    \end{split}
\end{equation}
where in the inequality we used the fact  that
\begin{equation}\label{A2}
\begin{split}
   \int_{t+\theta}^{t}(\ll_1 (r)+\vv)\,\d r&=\int_{t-\lfloor t/\tau\rfloor \tau+\theta}^{t-\lfloor t/\tau\rfloor \tau}(\ll_1 (r)+\vv)\,\d r\\
   &\ge \inf_{0\le u\le \tau,-r_0\le \theta\le0}\int_{u+\theta}^u(\ll_1 (r)+\vv)\,\d r\ge c_*(r_0,\tau)
   \end{split}
\end{equation}
by making use of the $\tau$-periodic property of $\ll_1(\cdot)$ in the identity. In \eqref{A2},   $c_*(r_0,\tau)$ was defined in \eqref{WW}.
Again, via the periodic property of $\ll_1(\cdot)$, in addition to the locally integrable property of $\ll_1(\cdot)$, we find from Lemma \ref{lem0} that
 \begin{equation*}
\begin{split}
 \sup_{ t-r_0\le r \le  (t-r_0)\vee s}\bigg(-\int_s^r(\ll_1 (u)+\vv)\,\d u\bigg)
 &\le  C_0:= \sup_{t\in\R} \int_{t-r_0}^t|\ll_1 (r)+\vv|\,\d r<\8.
 \end{split}
\end{equation*}
Thus we have that
\begin{align}\label{eqn:I2}
    I_2(t)\leq \e^{-c_*(r_0,\tau)}\sup_{ t-r_0\le r \le  (t-r_0)\vee s} \e^{-\int_s^r(\ll_1 (u)+\vv)\,\d u}\sup_{ t-r_0\le r \le  (t-r_0)\vee s}|X^s(r)|^2 \leq \e^{C_0-c_*(r_0,\tau)}\|\xi\|^2_\infty.
\end{align}
As a result, substituting \eqref{eqn:I2} and the integration form of \eqref{E6} into \eqref{A1}
gives that
\begin{equation}\label{E8}
\begin{split}
     \e^{-\int_s^t(\ll_1 (r)+\vv)\,\d r}\E\|X_t^s\|^2_\infty
     &\leq\big( 1\vee\e^{C_0-c_*(r_0,\tau)}\big)\E\|\xi\|^2_\infty+\e^{-c_*(r_0,\tau)} \int_s^{t}C_\vv(r) \e^{-\int_s^r(\ll_1 (u)+\vv)\,\d u}  \,\d r\\
     &\quad+\e^{-c_*(r_0,\tau)}\int_s^{t}(\lambda_2(r)+\ll_3(r)+\vv) \e^{-\int_s^r(\ll_1 (u)+\vv)\,\d u} \E\|X_r^s\|^2_\infty \,\d r  \\
     &\quad+\e^{-c_*(r_0,\tau)}\E\bigg(\sup_{ (t-r_0)\vee s\le r\le t}M^s(r)\bigg).
\end{split}
\end{equation}

The following crucial step is to estimate the fourth term on the right hand side of \eqref{E8}. To achieve this, we apply BDG's inequality (see Lemma \ref{ose} above for more details) to derive that
\begin{equation*}
\begin{split}
\E\bigg(\sup_{ (t-r_0)\vee s\le r\le t}M^s(r)\bigg)&=2\mathbb{E}\bigg(\sup_{(t-r_0)\vee s\le r\le t}\int_s^r \e^{-\int_s^u(\ll_1 (v)+\vv)\,\d v}\langle X^s(u),\si(u,X^s_u) \mathrm{d}W(u)\rangle\bigg)\\
&=2\mathbb{E}\bigg(\sup_{(t-r_0)\vee s\le r\le t}\int_{(t-r_0)\vee s}^r \e^{-\int_s^u(\ll_1 (v)+\vv)\,\d v}\langle X^s(u),\si(u,X^s_u) \mathrm{d}W(u)\rangle\bigg)\\
&\leq 2\chi  \mathbb{E}\bigg(\int_{(t-r_0)\vee s}^{t} \e^{-2\int_s^r(\ll_1 (u)+\vv)\,\d u} |\si(r,X^s_r)^*X^s(r)|^2\, \d r\bigg)^{1/2},
\end{split}
\end{equation*}
where, for a matrix $A$, $A^*$ means its transpose.
Subsequently, via  Young's inequality, we find   that
\begin{align}
&\E\bigg(\sup_{ (t-r_0)\vee s\le r\le t}M^s(r)\bigg)\nonumber\\
&\le2\chi \mathbb{E} \bigg(\sup_{(t-r_0)\vee s\le r\le t}\Big(\e^{-\big(\int_s^t(\ll_1 (u)+\vv)\,\d u+\int_t^r(\ll_1 (u)+\vv)\,\d u\big)}|X^s(r)|^2\Big)\nonumber\\
&\quad\times\int_{(t-r_0)\vee s}^{t} \e^{-\int_s^r(\ll_1 (u)+\vv)\,\d u}  \|\si(r,X^s_r)\|^2_{\rm HS}\, \d r\bigg)^{1/2} \nonumber\\
&\le 2\chi\e^{\sup_{(t-r_0)\vee s\le r\le t}\int_r^t(\ll_1 (u)+\vv)\,\d u}\nonumber\\
&\quad\times\mathbb{E} \bigg(\e^{-\int_s^t(\ll_1 (u)+\vv)\,\d u}\|X^s_t\|^2_\8 \int_{(t-r_0)\vee s}^{t} \e^{-\int_s^r(\ll_1 (u)+\vv)\,\d u}  \|\si(r,X^s_r)\|^2_{\rm HS}\, \d r\bigg)^{1/2} \label{A3}\\
&\le2\chi\e^{c^*(r_0,\tau)+\vv r_0}  \mathbb{E} \bigg(\e^{-\int_s^t(\ll_1 (u)+\vv)\,\d u}\|X^s_t\|^2_\8 \int_{(t-r_0)\vee s}^{t} \e^{-\int_s^r(\ll_1 (u)+\vv)\,\d u}  \|\si(r,X^s_r)\|^2_{\rm HS}\, \d r\bigg)^{1/2} \nonumber \\
&\le \frac{1}{2}\e^{c_*(r_0,\tau)}\e^{-\int_s^t(\ll_1 (u)+\vv)\,\d u}\E\|X^s_t\|^2_\8+2\chi^2\e^{-c_*(r_0,\tau) +2(c^*(r_0,\tau)+\vv r_0)}\nonumber\\
&\quad\times \int_s^t\e^{-\int_s^r(\ll_1 (u)+\vv)\,\d u}\E\|\si(r,X^s_r)\|^2_{\rm HS}\, \d r\nonumber\\
&\le\frac{1}{2}\e^{c_*(r_0,\tau)}\e^{-\int_s^t(\ll_1 (u)+\vv)\,\d u}\E\|X^s_t\|^2_\8\nonumber\\
&\quad+2\chi^2\e^{-c_*(r_0,\tau)+2(c^*(r,\tau)+\vv r_0)}\int_s^t(\ll_3(r)+\vv)\e^{-\int_s^r(\ll_1 (u)+\vv)\,\d u}\E \|X^s_r\|^2_\8\, \d r\nonumber\\
&\quad+2\chi^2\e^{-c_*(r_0,\tau)+2(c^*(r_0,\tau)+\vv r_0)}\int_s^t(1+\ll_3(r)/\vv)\| \si(r,{\bf0})\|^2_{\rm HS}\e^{-\int_s^r(\ll_1 (u)+\vv)\,\d u}  \, \d r,\nonumber
\end{align}
where in the third inequality we utilized
\begin{equation*}
\begin{split}\sup_{(t-r_0)\vee s\le r\le t}\int_r^t(\ll_1 (u)+\vv)\,\d u&\le \sup_{ -r_0 \le \theta\le 0}\int_{t+\theta}^t(\ll_1 (u)+\vv)\,\d u
=\sup_{ -r_0 \le \theta\le 0} \int_{t-\lfloor t/\tau\rfloor \tau+\theta}^{t-\lfloor t/\tau\rfloor \tau}(\ll_1 (r)+\vv)\,\d r\\
&\le \sup_{0\le u\le \tau,-r_0\le \theta\le0}\int_{u+\theta}^u(\ll_1 (r)+\vv)\,\d r=c^*(r_0,\tau)+\vv r_0
\end{split}
\end{equation*}
with $c^*(r_0,\tau)$ being  introduced in \eqref{WW}, and in the last display we used \eqref{E5}.  Next, substituting the estimate \eqref{A3} into \eqref{E8} yields
\begin{equation*}
\begin{split}
     \e^{-\int_s^t(\ll_1 (r)+\vv)\,\d r}\E\|X_t^s\|^2_\infty
     &\leq \e^{C_0-c_*(r_0,\tau)}\E\|\xi\|^2_\infty+\e^{-c_*(r_0,\tau)} \int_s^{t}C_\vv(r) \e^{-\int_s^r(\ll_1 (u)+\vv)\,\d u}  \,\d r\\
     &\quad+\e^{-c_*(r_0,\tau)}\int_s^{t}\lambda_2(r) \e^{-\int_s^r(\ll_1 (u)+\vv)\,\d u} \E\|X_r^s\|^2_\infty \,\d r  \\
     &\quad+ \frac{1}{2}\e^{-\int_s^t(\ll_1 (u)+\vv)\,\d u}\E\|X^s_t\|^2_\8\\
&\quad+2\chi^2\e^{-2(c_*(r_0,\tau)+2(c^*(r_0,\tau)+2\vv r_0)}\int_s^t(\ll_3(r)+\vv)\e^{-\int_s^r(\ll_1 (u)+\vv)\,\d u}\E \|X^s_r\|^2_\8\, \d r\\
&\quad+2\chi^2\e^{-2c_*(r_0,\tau)+2(c^*(r_0,\tau)+\vv r_0)}\int_s^t(1+\ll_3(r)/\vv)\| \si(r,{\bf0})\|^2_{\rm HS}\e^{-\int_s^r(\ll_1 (u)+\vv)\,\mathrm{d} u}  \, \d r.
\end{split}
\end{equation*}
Consequently, we arrive at
\begin{equation*}
\begin{split}
    \e^{-\int_s^t(\ll_1 (r)+\vv)\,\d r}\E\|X_t^s\|^2_\infty
     &\leq 2\e^{C_0-c_*(r_0,\tau)}\E\|\xi\|^2_\infty+ \int_s^{t}\LL_2^\vv(r) \e^{-\int_s^r(\ll_1 (u)+\vv),\d u}  \,\d r\\
     &\quad+\int_s^{t}\LL_1^\vv(r) \e^{-\int_s^r(\ll_1 (u)+\vv)\,\d u} \E\|X_r^s\|^2_\infty \,\d r,
\end{split}
\end{equation*}
where for $C_\vv(\cdot)$   given in \eqref{A4},
\begin{equation*}
\begin{split}
    \LL_1^\vv(t):&=2\e^{-c_*(r_0,\tau)}\big(\lambda_2(t)+\ll_3(t)+\vv+2\chi^2\e^{-c_*(r_0,\tau)+2(c^*(r_0,\tau)+\vv r_0)}(\ll_3(t)+\vv)\big),\\
    \LL_2^\vv(t):&=2\e^{-c_*(r_0,\tau)}C_\vv(t)+4\chi^2\e^{-2c_*(r_0,\tau)+2(c^*(r_0,\tau)+\vv r_0)}(1+\ll_3(t)/\vv)\| \si(r,{\bf0})\|^2_{\rm HS}.
\end{split}
\end{equation*}
Then, making use of Gronwall's inequality (e.g., \cite[Theorem 1.20, p.18]{Kle}) gives \begin{equation*}
\begin{split}
\E\|X_t^s\|^2_\infty
     &\leq 2\e^{C_0-c_*(r_0,\tau)}\E\|\xi\|^2_\infty \e^{\int_s^t(\ll_1 (r)+\vv)\,\d r}+ \int_s^{t}\LL_2^\vv(r) \e^{\int_r^t(\ll_1 (u)+\vv)\,\d u}  \,\d r\\
     &\quad+2\e^{C_0-c_*(r_0,\tau)}\E\|\xi\|^2_\infty\e^{\int_s^t(\ll_1 (r)+\vv)\,\d r}\int_s^{t}\LL_1^\vv (r)\e^{\int_r^t\LL_1^\vv (u)\,\d u}\,\d r\\
     &\quad+\int_s^{t}\bigg( \int_s^r\LL_2^\vv(u) \e^{\int_u^t(\ll_1 (v)+\vv)\,\d v}  \,\d u\bigg)\LL_1^\vv (r)\e^{\int_r^t\LL_1^\vv (u)\,\d u}\,\d r\\
     &=:\Pi_1(t)+\Pi_2(t)+\Pi_3(t)+\Pi_4(t).
\end{split}
\end{equation*}
In what follows, we estimate $\Pi_i(t), i=1,2,3,4,$ one by one. Since the function $\ll_1(\cdot)$ is $\tau$-periodic, we therefore obtain from Lemma \ref{lem0} that
\begin{equation*}
\begin{split}
  \int_s^t(\ll_1 (r)+\vv)\,\d r
  &\le \lfloor (t-s)/\tau\rfloor\int_0^\tau(\ll_1 (r)+\vv)\,\d r+2\int_0^\tau|(\ll_1 (r)+\vv)|\,\d r.
  \end{split}
\end{equation*}
Hence, $\Pi_1(\cdot)$ is bounded as below:
\begin{equation}\label{WW1}
\Pi_1(t)\le  2\e^{C_0-c_*(r_0,\tau)}\E\|\xi\|^2_\infty \exp\bigg(\lfloor (t-s)/\tau\rfloor\int_0^\tau(\ll_1 (r)+\vv)\,\d r+2\int_0^\tau|(\ll_1 (r)+\vv)|\,\d r\bigg).
\end{equation}
Next, by invoking Lemma \ref{lem0} again, it follows that
\begin{equation}\label{WW2}
\Pi_2(t)\le\|\LL_2^\vv\|_{\tau,\8}\e^{2(\|\ll_1 \|_{\tau,\8}+\vv)\tau}\int_s^t
\exp\bigg(\lfloor(t-r)/\tau\rfloor\int_0^\tau(\ll_1 (u)+\vv)\,\d u\bigg)\,\d r.
\end{equation}
Once more, using the integration-by-parts formula and taking advantage of Lemma \ref{lem0} yields
\begin{equation}\label{WW3}
\begin{split}
\Pi_3(t)&= 2\e^{C_0-c_*(r_0,\tau)}\E\|\xi\|^2_\infty\e^{\int_s^t(\ll_1 (r)+\vv)\,\d r}\Big( \e^{\int_s^t\LL_1^\vv (r)\,\d r}-1\Big)\\
&\le 2\e^{C_0-c_*(r_0,\tau)}\E\|\xi\|^2_\infty\\
&\quad\times\exp\bigg(\lfloor (t-s)/\tau\rfloor\int_0^\tau((\ll_1 (r)+\vv)+\LL_1^\vv (r))\,\d s+2\tau(\|\ll_1+\LL_1^\vv \|_{\tau,\8}+\vv)\bigg).
 \end{split}
\end{equation}
Finally,
via Fubini's theorem and the integration-by-parts formula, we obtain that
\begin{equation*}
\begin{split}
\Pi_4(t)&=\int_s^{t} \LL_2^\vv(u) \e^{\int_u^t(\ll_1(v)+\vv) \,\d v} \bigg(\int_u^t \LL_1^\vv (r)\e^{\int_r^t\LL_1^\vv (u)\,\d u}\,\d r\bigg)\,\d u\\
&=\int_s^{t} \LL_2^\vv(r) \e^{\int_r^t(\ll_1(u)+\vv)\,\d u} \bigg(\e^{\int_r^t\LL_1^\vv (u)\,\d u}-1\bigg) \,\d r.
\end{split}
\end{equation*}
Thus, Lemma \ref{lem0} enables us to derive that
\begin{equation*}
\begin{split}
\Pi_4(t)\le \|\LL_2^\vv\|_{\tau,\8}\e^{2\tau(\|\ll_1 +\LL_1^\vv \|_{\tau,\8}+\vv)}\int_s^t
\exp\bigg(\lfloor(t-r)/\tau\rfloor\int_0^\tau(\ll_1(u)+\vv+\LL_1^\vv (u))\,\d u\bigg)\,\d r.
\end{split}
\end{equation*}
Combining this with \eqref{WW1}, \eqref{WW2}, and \eqref{WW3},  we find that for some constant $C>0$ (independent of $s,t$ and $\xi$),
\begin{equation*}
\begin{split}
\E\|X_t^s\|^2_\infty
     &\leq
     C\int_s^t
\exp\bigg(\lfloor(t-r)/\tau\rfloor\int_0^\tau(\ll_1(u)+\vv)\,\d u\bigg)\,\d r\\
     &\quad+C\E\|\xi\|^2_\infty\exp\bigg( \lfloor (t-s)/\tau\rfloor \int_0^\tau(\ll_1(r)+\vv+\LL_1^\vv (r))\,\d r \bigg)\\
     &\quad+C\int_s^t
\exp\bigg(\lfloor(t-r)/\tau\rfloor\int_0^\tau(\ll_1(u)+\vv+\LL_1^\vv (u))\,\d u\bigg)\,\d r.
\end{split}
\end{equation*}
This further implies that for some constant $C_0>0$ (independent of $s,t$ and $\xi$),
\begin{equation*}
\begin{split}
\E \|X_t^s\|^2_\infty
     &\leq
     C_0\int_s^t
\exp\bigg((t-r)/\tau\int_0^\tau(\ll_1(u)+\vv)\,\d u\bigg)\,\d r\\
     &\quad+C_0\E \|\xi\|^2_\infty \exp\bigg(   (t-s)/\tau  \int_0^\tau(\ll_1(r)+\vv+\LL_1^\vv (r))\,\d r \bigg)\\
     &\quad+C_0\int_s^t
\exp\bigg( (t-r)/\tau \int_0^\tau(\ll_1(u)+\vv+\LL_1^\vv (u))\,\d u\bigg)\,\d r.
\end{split}
\end{equation*}
As a consequence, \eqref{E11} follows directly from \eqref{E10}.
\end{proof}

In the sequel,  we aim to explore the continuous dependence in the mean-square sense concerning initial values.
\begin{lemma}\label{lem:stable1}
Under Assumption $({\bf H})$, for all  $(t,s)\in\triangle$ and $\xi=\xi_s,\eta=\eta_s\in L^2(\OO\to\C,\F_s,\P)$, there exists a constant $C^*>0$ such that
\begin{equation}\label{E2}\E\|X_t^{s,\xi}-X_t^{s,\eta}\|^2_\8\le C^* \E\|\xi-\eta\|^2_\infty\exp\bigg(\lfloor (t-s)/\tau\rfloor\int_0^\tau\Theta (r)\,\d r \bigg),
\end{equation}
where for all $r\in\R,$
$$\Theta(r):=\ll_1(r)+2\e^{-c_*(r_0,\tau)}\big(\lambda_2(r) +\ll_3(r)+2\chi^2\e^{-c_*(r_0,\tau)+2c^*(r_0,\tau)}\ll_3(r)\big).$$
Consequently, if   \eqref{WE} holds true,
then there exists a constant $C^{**}>0$ such that for all $t\ge s$,
\begin{equation}\label{E3}
 \E\|X_t^{s,\xi}-X_t^{s,\eta}\|^2_\8\le C^{**}\e^{-\ell (t-s)}\E\|\xi-\eta\|^2_\infty,
\end{equation}
where $\ell>0$ was given in \eqref{WE}.
\end{lemma}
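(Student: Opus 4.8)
The plan is to run a synchronous coupling: I would drive both $X^{s,\xi}$ and $X^{s,\eta}$ by the \emph{same} Brownian motion $W$ and track the difference $Z(t):=X^{s,\xi}(t)-X^{s,\eta}(t)$, whose path segment is $Z_t=X_t^{s,\xi}-X_t^{s,\eta}$ and satisfies $Z_t(0)=Z(t)$. Then $Z$ solves
\begin{equation*}
\d Z(t)=\big(b(t,X_t^{s,\xi})-b(t,X_t^{s,\eta})\big)\,\d t+\big(\si(t,X_t^{s,\xi})-\si(t,X_t^{s,\eta})\big)\,\d W(t),
\end{equation*}
and I would apply It\^o's formula to $\e^{-\int_s^t\ll_1(r)\,\d r}|Z(t)|^2$. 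The crucial feature is an exact cancellation: the term $-\ll_1(t)|Z(t)|^2$ generated by the exponential weight is annihilated by the $\ll_1(t)|Z(t)|^2$ coming from \eqref{eqn:A1} with $\xi,\eta$ replaced by $X_t^{s,\xi},X_t^{s,\eta}$ (recall $Z(t)=Z_t(0)$). Because we work with a difference, no inhomogeneous $b(t,\mathbf0)$ or $\si(t,\mathbf0)$ terms appear, so, unlike in Lemma \ref{lem:boundedness1}, no auxiliary parameter $\vv$ is needed. Feeding in \eqref{eqn:A1} and \eqref{eqn:A2} leaves
\begin{equation*}
\d\big(\e^{-\int_s^t\ll_1(r)\,\d r}|Z(t)|^2\big)\le \e^{-\int_s^t\ll_1(r)\,\d r}\big(\ll_2(t)+\ll_3(t)\big)\|Z_t\|_\infty^2\,\d t+\d M^s(t),
\end{equation*}
with $\d M^s(t):=2\e^{-\int_s^t\ll_1(r)\,\d r}\langle Z(t),(\si(t,X_t^{s,\xi})-\si(t,X_t^{s,\eta}))\,\d W(t)\rangle$.

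Next I would convert the window norm $\|Z_t\|_\infty$ into a running supremum exactly as in \eqref{A1}. Using the $\tau$-periodicity of $\ll_1$ to bound $\int_{t+\theta}^t\ll_1\ge c_*(r_0,\tau)$ (with $c_*,c^*$ as in \eqref{WW}), I get $\e^{-\int_s^t\ll_1}\|Z_t\|_\infty^2\le I_1(t)\vee I_2(t)$, where $I_1$ is $\e^{-c_*(r_0,\tau)}$ times the weighted supremum of $|Z(r)|^2$ over $[(t-r_0)\vee s,t]$ and the boundary contribution $I_2$ is controlled, as in \eqref{eqn:I2}, by $\e^{C_0-c_*(r_0,\tau)}\|\xi-\eta\|_\infty^2$, since for $r\le s$ one has $|Z(r)|\le\|\xi-\eta\|_\infty$. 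The one genuinely delicate step, and the main obstacle, is the sharp treatment of the martingale term $\E\sup_{(t-r_0)\vee s\le r\le t}M^s(r)$: I would invoke the optimal BDG inequality of Lemma \ref{ose} and then split the resulting bracket by Young's inequality exactly along the chain \eqref{A3}, so that one factor absorbs $\tfrac12\e^{c_*(r_0,\tau)}\e^{-\int_s^t\ll_1}\E\|Z_t\|_\infty^2$ into the left-hand side, while the other, after inserting \eqref{eqn:A2}, yields the weight $2\chi^2\e^{-c_*(r_0,\tau)+2c^*(r_0,\tau)}$ in front of $\int_s^t\ll_3(r)\e^{-\int_s^r\ll_1}\E\|Z_r\|_\infty^2\,\d r$; here the factor $\e^{2c^*(r_0,\tau)}$ arises from $\sup_{(t-r_0)\vee s\le r\le t}\int_r^t\ll_1$. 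This is precisely where the constant $\chi$ and the exponents $c_*,c^*$ feeding into $\Theta$ are produced.

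Writing $u(t):=\e^{-\int_s^t\ll_1(r)\,\d r}\E\|Z_t\|_\infty^2$, the previous two steps assemble into a Gronwall-type inequality
\begin{equation*}
u(t)\le C\,\E\|\xi-\eta\|_\infty^2+\int_s^t\LL_1(r)\,u(r)\,\d r,\qquad \LL_1(r):=2\e^{-c_*(r_0,\tau)}\big(\ll_2(r)+\ll_3(r)+2\chi^2\e^{-c_*(r_0,\tau)+2c^*(r_0,\tau)}\ll_3(r)\big).
\end{equation*}
Gronwall's inequality then gives $u(t)\le C\,\E\|\xi-\eta\|_\infty^2\,\e^{\int_s^t\LL_1(r)\,\d r}$, and multiplying back by $\e^{\int_s^t\ll_1}$ replaces the exponent by $\int_s^t(\ll_1(r)+\LL_1(r))\,\d r=\int_s^t\Theta(r)\,\d r$, since $\Theta=\ll_1+\LL_1$ by construction. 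Applying Lemma \ref{lem0} to the $\tau$-periodic function $\Theta$ extracts the factor $\lfloor(t-s)/\tau\rfloor\int_0^\tau\Theta(r)\,\d r$ with a uniformly bounded remainder that can be absorbed into $C^*$; this is exactly \eqref{E2}. Finally, \eqref{E3} follows at once: under \eqref{WE} we have $\int_0^\tau\Theta(r)\,\d r=\ell<0$, so the elementary bound $\lfloor(t-s)/\tau\rfloor\ge (t-s)/\tau-1$ converts the floor-weighted exponential in \eqref{E2} into exponential decay in $t-s$, the surplus being absorbed into the constant $C^{**}$.
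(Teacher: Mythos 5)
Your proposal is correct and follows essentially the same route as the paper's own proof: synchronous coupling, It\^o's formula applied to $\e^{-\int_s^t\ll_1(r)\,\d r}|Z(t)|^2$ (with the $\ll_1$ cancellation and no auxiliary $\vv$), the $I_1\vee I_2$ window-norm conversion with the constants $c_*(r_0,\tau),c^*(r_0,\tau)$, the optimal BDG/Young treatment of the martingale producing the $2\chi^2\e^{-c_*(r_0,\tau)+2c^*(r_0,\tau)}$ weight, and then Gronwall plus Lemma \ref{lem0}. The only caveat is the final decay rate: since $\int_0^\tau\Theta(r)\,\d r=\ell$, the floor bound actually yields the rate $\ell/\tau$ rather than $\ell$, but this sign/normalization slip is already present in the paper's statement of \eqref{E3} and does not affect the substance of the argument.
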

\begin{proof}
For notation brevity, we set
\begin{equation*}\label{FF}
  \Lambda_t^s:=X_t^{s,\xi}-X_t^{s,\eta},\qquad t\ge s
\end{equation*}
so $\Lambda^s(t):=X^{s,\xi}(t)-X^{s,\eta}(t)$.
Applying It\^o's formula   and making use of \eqref{eqn:A1} and \eqref{eqn:A2} yields
\begin{align}\label{eqn:Ito2}
    \begin{split}
     \d\big(\e^{-\int_s^t\ll_1(r)\,\d r}|\LL^s(t)|^2\big)&= \e^{-\int_s^t\ll_1(r)\,\d r} \big(-\ll_1(t)|\LL^s(t)|^2+2\< \LL^s(t), b(t,X^{s,\xi}_t)-b(t,X^{s,\eta}_t)\> \\
     &\quad \quad +\|\si(t,X^{s,\xi}_t)-\si(t,X^{s,\eta}_t)\|^2_{\rm HS}\big)\,\d t+\d N^s(t)\\
     &\leq (\ll_2(t)+\ll_3(t))\e^{-\int_s^t\ll_1(r)\,\d r} \|\LL^s_t\|_\8^2\,\d t+\d N^s(t),
\end{split}
\end{align}
where
$$\d N^s(t):=2\e^{-\int_s^t\ll_1(r)\,\d r}\< \LL^s(t),(\si(t,X^{s,\xi}_t)-\si(t,X^{s,\eta}_t))\,\d W(t)\>. $$
By following the argument to derive \eqref{E8}, we have
\begin{equation}\label{3P}
\begin{split}
     \e^{-\int_s^t\ll_1 (r)\,\d r}\E\|\LL^s_t\|^2_\infty
     &\leq \e^{C_0-c_*(r_0,\tau)}\E\|\xi-\eta\|^2_\infty\\
     &\quad+\e^{-c_*(r_0,\tau)}\int_s^{t}(\lambda_2(r)+\ll_3(r)) \e^{-\int_s^r\ll_1 (u)\,\d u} \E\|\LL_r^s\|^2_\infty \,\d r  \\
     &\quad+\e^{-c_*(r_0,\tau)}\E\bigg(\sup_{ (t-r_0)\vee s\le r\le t}N^s(r)\bigg).
\end{split}
\end{equation}
Next, by carrying out a similar way to derive \eqref{A3}, we infer that
\begin{equation*}
\begin{split}
\E\bigg(\sup_{ (t-r_0)\vee s\le r\le t}N^s(r)\bigg)
&\le\frac{1}{2}\e^{c_*(r_0,\tau)}\e^{-\int_s^t\ll_1 (r)\,\d r}\E\|\LL^s_t\|^2_\8\\
&\quad+2\chi^2\e^{-c_*(r_0,\tau)+2c^*(r_0,\tau)}\int_s^t\ll_3(r)\e^{-\int_s^r\aa(u)\,\d u}\E \|\LL^s_r\|^2_\8\, \d r.
\end{split}
\end{equation*}
Now, plugging this estimate back into \eqref{3P} yields
\begin{equation}
\begin{split}
     \e^{-\int_s^t\ll_1 (r)\,\d r}\E\|\LL^s_t\|^2_\infty
     &\leq 2\e^{C_0-c_*(r_0,\tau)}\E\|\xi-\eta\|^2_\infty+\int_s^{t}\hat{\lambda}(r) \e^{-\int_s^r\ll_1 (u)\,\d u} \E\|\LL_r^s\|^2_\infty \,\d r ,
\end{split}
\end{equation}
where
\begin{equation*}
    \hat{\ll}(t):=2\e^{-c_*(r_0,\tau)}\big(\lambda_2(t)+\ll_3(t) +2\chi^2\e^{-c_*(r_0,\tau)+2c^*(r_0,\tau)}\ll_3(t)\big).
\end{equation*}
Subsequently, applying the Gronwall inequality and taking Lemma \ref{lem0} into consideration enables the following estimate
\begin{equation*}
\begin{split}\E\|\LL^s_t\|^2_\infty&\le 2\e^{C_0-c_*(r_0,\tau)}\E\|\xi-\eta\|^2_\infty \e^{\int_s^t(\ll_1(r)+\hat{\ll} (r))\,\d r}\\
&\le C_1\E\|\xi-\eta\|^2_\infty\exp\bigg(\lfloor (t-s)/\tau\rfloor \int_0^\tau(\ll_1(s)+\hat{\ll} (s))\,\d s\bigg)\\
&\le C_2\E\|\xi-\eta\|^2_\infty\exp\bigg(  (t-s)/\tau  \int_0^\tau(\ll_1(s)+\hat{\ll} (s))\,\d s \bigg)\\
\end{split}
\end{equation*}
for some constants $C_1,C_2>0$,
so \eqref{E2} holds true.
Finally, with   \eqref{WE} at hand, the desired assertion \eqref{E3} is available immediately.
 \end{proof}

As aforementioned, $(\OO,\F,\P,\theta)$ is a metric dynamical system and the path map $\phi$ defined in \eqref{WWW} is a stochastic semi-flow. If we further have for all $(s,t)\in\triangle$, $h\in\R$, $\xi\in\C$ and $\omega\in\OO,$
\begin{equation}\label{WWP}
 \phi(t+h,s+h,\xi,\omega)=\phi(t,s,\xi,\theta_\tau\omega),
\end{equation}
then the stochastic semi-flow $\phi$ corresponds to a random dynamical system. Due to the influence of time inhomogeneity,  the relationship \eqref{WWP} need not hold for all $h\in\R$. As Lemma \ref{lemma3} below
 shows that  the relation \eqref{WWP} holds true merely for $h=\tau$, which however is sufficient for our goal in the present work.

\begin{lemma}\label{lemma3}
Under Assumption $({\bf H})$, for all $t\ge s$, $\xi\in\C$ and $\omega\in\Omega$,
\begin{equation}\label{PP4}
 \phi(t+\tau,s+\tau,\xi,\omega)=\phi(t,s,\xi,\theta_\tau\omega).
\end{equation}
\end{lemma}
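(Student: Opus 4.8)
The plan is to prove the process-level identity $X^{s+\tau,\xi}(v+\tau,\omega)=X^{s,\xi}(v,\theta_\tau\omega)$ for all $v\ge s-r_0$ and then read it off along the window $[t-r_0,t]$, which delivers \eqref{PP4} immediately. To this end I would fix $\omega\in\Omega$, set $Y(v):=X^{s+\tau,\xi}(v+\tau,\omega)$ for $v\ge s-r_0$, and aim to verify that $Y$ is a solution of \eqref{eq:SPDE}, driven by the shifted noise $W(\cdot,\theta_\tau\omega)$, started from $\xi$ at time $s$. Pathwise uniqueness (guaranteed under $({\bf H})$ by \cite[Theorem 2.3]{RS}) then forces $Y(\cdot)=X^{s,\xi}(\cdot,\theta_\tau\omega)$ on $[s,\8)$.

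First I would record the integral form of the defining equation: for $t\ge s$,
\begin{equation*}
X^{s+\tau,\xi}(t+\tau,\omega)=\xi(0)+\int_{s+\tau}^{t+\tau}b(r,X_r^{s+\tau,\xi})\,\d r+\int_{s+\tau}^{t+\tau}\si(r,X_r^{s+\tau,\xi})\,\d W(r,\omega),
\end{equation*}
together with the segment identity $X_{v+\tau}^{s+\tau,\xi}=Y_v$ (since $Y_v(\theta)=Y(v+\theta)=X^{s+\tau,\xi}(v+\tau+\theta,\omega)$) and the initial condition $Y_s=\xi$, which comes from $X_{s+\tau}^{s+\tau,\xi}=\xi$. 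The drift term is then handled by the substitution $r=v+\tau$ combined with the $\tau$-periodicity of $b(\cdot,\xi)$, namely $b(v+\tau,\cdot)=b(v,\cdot)$, producing $\int_s^t b(v,Y_v)\,\d v$; the same change of variable and periodicity turn the diffusion integrand into $\si(v,Y_v)$.

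The hard part will be the corresponding transformation of the stochastic integral, i.e.\ establishing
\begin{equation*}
\int_{s+\tau}^{t+\tau}\si(r,X_r^{s+\tau,\xi})\,\d W(r,\omega)=\int_s^t\si(v,Y_v)\,\d W(v,\theta_\tau\omega).
\end{equation*}
At the level of increments this is the shift rule $W(t,\theta_\tau\omega)-W(s,\theta_\tau\omega)=\omega(t+\tau)-\omega(s+\tau)=W(t+\tau,\omega)-W(s+\tau,\omega)$, so the assertion is intuitively clear; but since the It\^o integral is only an $L^2$-limit, a rigorous argument would first verify the identity for simple adapted integrands (where it reduces to the elementary increment rule above) and then pass to the limit, using that $\theta_\tau$ preserves the Wiener measure $\P$, so that $L^2(\P)$-approximations are carried to $L^2(\P)$-approximations and the It\^o isometry is respected. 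Adaptedness of the shifted integrand to the natural filtration of $v\mapsto W(v,\theta_\tau\omega)$ is inherited from causality: the solution started at time $s+\tau$ from the deterministic datum $\xi$ depends on $W(\cdot,\omega)$ only through its increments on $[s+\tau,\cdot]$, which under the shift are precisely the increments of $W(\cdot,\theta_\tau\omega)$ on $[s,\cdot]$.

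Combining the three pieces shows that, for every $t\ge s$,
\begin{equation*}
Y(t)=\xi(0)+\int_s^t b(v,Y_v)\,\d v+\int_s^t\si(v,Y_v)\,\d W(v,\theta_\tau\omega),
\end{equation*}
so $Y$ solves \eqref{eq:SPDE} driven by $\theta_\tau\omega$ with initial segment $\xi$ at time $s$; pathwise uniqueness gives $Y(v)=X^{s,\xi}(v,\theta_\tau\omega)$ for $v\ge s$, while on $[s-r_0,s]$ both sides coincide with the deterministic datum $\xi$. Hence $X^{s+\tau,\xi}(v+\tau,\omega)=X^{s,\xi}(v,\theta_\tau\omega)$ on the whole range $v\ge s-r_0$, and reading this along the window $[t-r_0,t]$ converts the process identity into the segment identity $X_{t+\tau}^{s+\tau,\xi}(\omega)=X_t^{s,\xi}(\theta_\tau\omega)$, which is exactly \eqref{PP4}. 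The only genuine obstacle is the shift-invariance of the stochastic integral in the preceding paragraph; everything else is bookkeeping with the substitution $r=v+\tau$ and the $\tau$-periodicity of the coefficients.
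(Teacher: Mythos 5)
Your proposal is correct and follows essentially the same route as the paper: write the solution in integral form, use the substitution $r=v+\tau$ together with the $\tau$-periodicity of $b$ and $\si$ and the shift identity for the driving noise, and conclude by pathwise uniqueness (the paper shifts $X^{s,\xi}(\cdot,\theta_\tau\omega)$ forward to match the equation started at $s+\tau$, whereas you shift $X^{s+\tau,\xi}(\cdot+\tau,\omega)$ backward, but these are mirror images of one argument). Your extra care with the transformation of the stochastic integral via simple integrands and measure preservation under $\theta_\tau$ is a point the paper passes over silently, but it does not change the structure of the proof.
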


\begin{proof}
To verify \eqref{PP4}, it is sufficient to check that the functional solution $(X_t^{s,\xi})_{t\ge s}$ to \eqref{eq:SPDE} satisfies that
 for  all $t\ge s$, $\xi\in\C$ and $\omega\in\OO$,
$$X_{t+\tau}^{s+\tau ,\xi}(\omega)=X_t^{s,\xi}(\theta_\tau\omega).$$
Obviously,  we  deduce from  \eqref{eq:SPDE} that  for all $t\ge s$, $\xi\in\C$ and $\omega\in\OO$,
\begin{equation}\label{PP1}
    X^{s,\xi}(t,\omega)=\xi(0)+\int_{s}^{t}b(u,X_u^{s,\xi}(\omega))\,\d u+\int_{s}^{t}\si(u,X^{s,\xi}(u,\omega))\,\d\omega(u).
\end{equation}
Again, from the functional SDE \eqref{eq:SPDE}, we find that all $t\ge s$, $\xi\in\C$ and $\omega\in\OO$,
\begin{equation}\label{PP}
\begin{split}
    &X^{s,\xi}(t,\theta_\tau\omega)\\&=\xi(0)+\int_s^tb(u,X_u^{s,\xi}(\theta_\tau\omega))\,\d u+\int_{s}^{t}\si(u,X^{s,\xi}(u,\theta_\tau\omega))\,\d(\theta_\tau\omega(u))\\
   &=\xi(0)+\int_s^tb(u+\tau,X_u^{s,\xi}(\theta_\tau\omega))\,\d u+\int_{s}^{t}\si(u+\tau,X^{s,\xi}(u,\theta_\tau\omega))\,\d(\theta_\tau\omega(u)) \\
   &=\xi(0)+\int_{s+\tau}^{t+\tau}b(u,X_{u-\tau}^{s,\xi}(\theta_\tau\omega))\,\d u+\int_{s+\tau}^{t+\tau}\si(u,X^{s,\xi}(u-\tau,\theta_\tau\omega))\,\d(\theta_{-\tau}\theta_\tau\omega(u))\\
   &=\xi(0)+\int_{s+\tau}^{t+\tau}b(u,X_{u-\tau}^{s,\xi}(\theta_\tau\omega))\,\d u+\int_{s+\tau}^{t+\tau}\si(u,X^{s,\xi}(u-\tau,\theta_\tau\omega))\,\d \omega(u),
    \end{split}
\end{equation}
where in the second identity we used the $\tau$-periodic property of $b$ and $\si$ w.r.t. the time variable, in the third identity we utilized the strategy of variable substitution, and in the last identity  we took advantage of the group property of the shift operator and $\theta_0={\rm id}_\OO.$ Now, for  all $u\ge s$, $\xi\in\C$ and $\omega\in\OO$,  let's define $Y_\cdot^{s+\tau,\xi}(\omega)\in\C$ by
\begin{equation}\label{PP3}
    Y_u^{s+\tau,\xi}(\omega)=X_{u-\tau}^{s,\xi}(\theta_\tau\omega),
\end{equation}
which apparently implies  $Y^{s+\tau,\xi}(u,\omega)=X^{s,\xi}(u-\tau,\theta_\tau\omega)$. With the notation $Y_\cdot^{s+\tau}(\omega)$ at hand, \eqref{PP} can be reformulated as
\begin{equation}\label{PP2}
\begin{split}
    Y^{s+\tau,\xi}(t+\tau,\omega)
   &=\xi(0)+\int_{s+\tau}^{t+\tau}b(u,Y^{s+\tau,\xi}_u(\omega))\,\d u+\int_{s+\tau}^{t+\tau}\si(u,Y^{s+\tau,\xi}(u,\omega))\,\d \omega(u).
    \end{split}
\end{equation}
Once more, by virtue of strong wellposedness of functional solutions to \eqref{eq:SPDE}, we deduce from \eqref{PP1} and \eqref{PP2} that
for  all $t\ge s$, $\xi\in\C$ and $\omega\in\OO$
$$X_{t+\tau}^{s+\tau,\xi}(\omega)=Y^{s+\tau,\xi}_{t+\tau}(\omega).$$
This, together with \eqref{PP3}, yields \eqref{PP4} directly.
\end{proof}

Prior to proceeding to the proof of Theorem \ref{thm:main1}, we present a refined framework of \cite[Theorem 3.2.4]{luo2014} to allow weaker asymptotic conditions, which underpins the existence and uniqueness of pathwise random periodic solutions of stochastic semi-flows.

\begin{proposition}\label{pr2} For   a Markov process $(Y^{s,x}(t))_{t\ge s}$ on the Polish space $(\mathbb U, \rho)$ and some $p>0$,
assume that
\begin{enumerate}
\item[$(i)$]$(Y^{s,\xi}(t))_{t\ge s}$
enjoys the semi-flow property;

\item[$(ii)$]For all $\xi=\xi_s\in L^p(\OO\to\mathbb U, \F_s,\P)$, there exists a constant $ C_0(\xi)>0$ such that
\begin{equation*}
 \sup_{t\ge s} \E\rho(Y^{s,\xi}(t), {\bf0}\big)^p\le C_0(\xi).
\end{equation*}

\item[$(iii)$] There exists a   function $h:[0,\8)\to[0,\8)$ satisfying for each fixed $t\in\R$ and some $\tau_0>0$,
\begin{equation}\label{L1}
    \lim_{s\to-\8}\sum_{j=0}^\8h(t-s+j\tau_0)=0
\end{equation}
and such that
 for all $(t,s)\in\triangle$ and $ \xi=\xi_s,\eta=\eta_s\in  L^p(\OO\to\mathbb U, \F_s,\P)$,
\begin{equation}\label{TT4T}
\Big(\E\rho\big( Y^{s,\xi}(t),Y^{s,\eta}(t)\big)^p\Big)^{\frac{1}{1\vee p}}\le h(t-s)\Big(\E\rho( \xi,\eta)^p\Big)^{\frac{1}{1\vee p}}.
\end{equation}
\end{enumerate}
Then, if the map $\phi:\triangle \times \mathbb U\times \OO\to \mathbb U$ defined via
\begin{equation*}
\phi(t,s,x,\omega)=Y^{s,x}(t,\omega), \quad (t,s)\in\triangle, ~x\in \mathbb U, ~\omega\in\Omega,
\end{equation*}
further satisfies the following property: for all $(t,s)\in\triangle, x\in \mathbb U,$   $\omega\in\Omega,$ and some $\tau>0,$
\begin{equation}\label{YYY*}
\phi(t+\tau,s+\tau,x,\omega)=\phi(t,s,x,\theta_\tau\omega),
\end{equation}
there exists a unique   $\F $-measurable stochastic process $(Y^*(t))_{t\in\R}$ such that for all $t\in\R$, and   $r\ge0,$
\begin{equation}\label{T*}
Y^*(t+r,\omega)=\phi(t+r, t, Y^*(t,\omega),\omega),\qquad Y^*(t+\tau,\omega)=Y^*(t,\theta_\tau\omega) \quad {\rm a.s. }
\end{equation}
and moreover  for all $\xi=\xi_s\in L^p(\OO\to\mathbb U,\F_s,\P)$ with $s\in\R,$
 \begin{equation}\label{P-}
\lim_{s\to-\8}  \E\rho(Y^{s,\xi}(t),Y^*(t))^p=0.
 \end{equation}
\end{proposition}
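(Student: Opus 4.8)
The plan is to upgrade the weak (distributional) argument underlying Proposition \ref{pr} to a strong ($L^p$) one, the decisive new ingredient being that here one obtains genuine $L^p$-convergence of the random variables, not merely convergence of their laws. To this end I would first introduce, for $\F_t$-measurable $\mathbb U$-valued random variables $\xi,\eta$ with finite $p$-th $\rho$-moment, the quantity
$$d_p(\xi,\eta):=\Big(\E\rho(\xi,\eta)^p\Big)^{\frac{1}{1\vee p}}.$$
The exponent $\tfrac{1}{1\vee p}$ is chosen precisely so that $d_p$ obeys the triangle inequality in both regimes: for $p\ge1$ this is Minkowski's inequality, while for $0<p<1$ the subadditivity of $t\mapsto t^p$ makes $\E\rho(\cdot,\cdot)^p$ itself subadditive. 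Hence $(L^p(\OO\to\mathbb U,\F_t,\P),d_p)$ is a complete metric space, and \eqref{TT4T} reads simply $d_p(Y^{s,\xi}(t),Y^{s,\eta}(t))\le h(t-s)\,d_p(\xi,\eta)$.

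The core step is to show that, for each fixed $t$ and $\xi$, the family $(Y^{s,\xi}(t))_s$ is $d_p$-Cauchy as $s\to-\8$. I would reproduce the telescoping estimate \eqref{Y*Y} from the proof of Proposition \ref{pr}, with $\mathbb W_\psi$ replaced by $d_p$: split $[s-r,s]$ into blocks of length $\tau_0$, use the semi-flow property (i) to rewrite each increment as a single solution started at $s-j\tau_0$, and then apply the contraction \eqref{TT4T} together with the uniform bound (ii), which, after one more triangle inequality, yields $\sup_{r'}\sup_{t'\ge r'}d_p(Y^{r',\xi}(t'),\xi)=:C(\xi)<\8$. This gives, by \eqref{L1},
$$\sup_{r\ge0}d_p\big(Y^{s-r,\xi}(t),Y^{s,\xi}(t)\big)\le C(\xi)\sum_{j=0}^\8 h(t-s+j\tau_0)\longrightarrow 0\qquad(s\to-\8).$$
Completeness then furnishes an $\F_t$-measurable limit $Y^*(t)$ satisfying \eqref{P-}; since $h(t-s)\to0$, the bound $d_p(Y^{s,\xi}(t),Y^{s,\eta}(t))\le h(t-s)\,d_p(\xi,\eta)$ shows that the limit does not depend on the initial datum $\xi$.

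It remains to verify \eqref{T*}. For the cocycle identity I would start from the semi-flow relation $Y^{s,\xi}(t+r)=\phi(t+r,t,Y^{s,\xi}(t),\omega)=Y^{t,Y^{s,\xi}(t)}(t+r)$ granted by (i), and let $s\to-\8$: the left side tends to $Y^*(t+r)$ in $L^p$, while on the right the contraction \eqref{TT4T} applied at starting time $t$ (legitimate since both $Y^{s,\xi}(t)$ and $Y^*(t)$ are $\F_t$-measurable and in $L^p$) gives $d_p(Y^{t,Y^{s,\xi}(t)}(t+r),Y^{t,Y^*(t)}(t+r))\le h(r)\,d_p(Y^{s,\xi}(t),Y^*(t))\to0$, so the right side tends to $\phi(t+r,t,Y^*(t),\omega)$; uniqueness of $L^p$-limits yields the first identity in \eqref{T*}. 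For the periodicity identity I would use \eqref{YYY*} in the form $Y^{s,\xi}(t+\tau,\omega)=Y^{s-\tau,\xi}(t,\theta_\tau\omega)$ and again let $s\to-\8$; the subtle point is that one must identify the limit of $\omega\mapsto Y^{s-\tau,\xi}(t,\theta_\tau\omega)$ with $\omega\mapsto Y^*(t,\theta_\tau\omega)$, which is legitimate because $\theta_\tau$ preserves $\P$ and therefore preserves $d_p$-convergence.

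Finally, uniqueness follows from the same contraction: if $Z^*$ is another $\F_t$-measurable, $\tau$-periodic solution of \eqref{T*}, then $Z^*(t)=Y^{s,Z^*(s)}(t)$ and $Y^*(t)=Y^{s,Y^*(s)}(t)$, whence $d_p(Z^*(t),Y^*(t))\le h(t-s)\,d_p(Z^*(s),Y^*(s))$; the $\tau$-periodicity of $Z^*$ and $Y^*$ together with the $\theta$-invariance of $\P$ bounds $d_p(Z^*(s),Y^*(s))$ uniformly in $s$, and letting $s\to-\8$ forces $Z^*(t)=Y^*(t)$ almost surely. I expect the main obstacle to be not the Cauchy estimate, which mirrors Proposition \ref{pr}, but the passage from distributional to pathwise identities: specifically, justifying the limit in the cocycle relation through the contraction with random (yet $\F_t$-measurable) initial data, and correctly exploiting the measure-preservation of $\theta_\tau$ so as to obtain the almost sure periodicity in \eqref{T*} rather than merely an identity in law.
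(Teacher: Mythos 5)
Your proposal follows essentially the same route as the paper's proof: introduce the metric $d_p(\xi,\eta)=(\E\rho(\xi,\eta)^p)^{1/(1\vee p)}$, establish the Cauchy property of $Y^{s,\xi}(t)$ as $s\to-\infty$ by the same telescoping-over-blocks-of-length-$\tau_0$ estimate combining the semi-flow property, the contraction \eqref{TT4T} and the uniform moment bound (ii), and then derive the cocycle and periodicity identities in \eqref{T*} by a triangle inequality together with the contraction applied to the ($\F_t$-measurable) random initial data and the measure-preservation of $\theta_\tau$. Your explicit remarks on why $d_p$ is a metric for $0<p<1$ and your spelled-out uniqueness argument are slightly more detailed than the paper's, but the substance is identical.
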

\begin{proof}
Define the metric between  $\xi\in L^p(\OO\to\mathbb U)$ and $\eta\in L^p(\OO\to\mathbb U)$ as below
\begin{equation*}
d_p(\xi,\eta)=\big(\E\rho(\xi,\eta)^p\big)^{\frac{1}{1\vee p}},\qquad p>0.
\end{equation*}
Due to (ii), it is easy to see that $d_p(Y^{s,\xi}(t),{\bf0})$ is well defined
for all $(t,s)\in\triangle$ and $\xi=\xi_s\in L^p(\OO\to\mathbb U,\F_s,\P)$.  Once $Y^{\cdot,\xi}(t)$ is a Cauchy sequence under the metric $d_p$, i.e.,   for all $(t,s)\in \triangle$ and $\xi=\xi_s\in L^p(\OO\to\mathbb U,\F_s,\P)$,
\begin{equation}\label{LL}
\lim_{s\to-\8}\sup_{r\ge0}d_p\big(Y^{s-r,\xi}(t),Y^{s,\xi}(t)\big)=0.
\end{equation}
then there exists a unique stochastic process $(Y^*(t))_{t\in\R}\in L^p(\OO\to\mathbb U,\F,\P)$, which is independent of $\xi$, such that
\begin{equation}\label{P*}
 \lim_{s\to-\8} d_p(Y^{s,\xi}(t),Y^*(t))  =0
\end{equation}
so \eqref{P-} follows. Next,   we find that
for all $(t,s)\in\triangle$, $\xi=\xi_s\in L^p(\OO\to\mathbb U,\F_s,\P)$ and $r\ge0$, \begin{equation}\label{P--}
\begin{split}
d_p\big(\phi(t+r, t, Y^*(t)),Y^*(t+r)\big)&\le d_p\big(\phi(t+r, s,\xi),Y^*(t+r)\big)\\
&\quad+d_p\big(\phi(t+r, t, Y^*(t)),\phi(t+r, s,\xi )\big)\\
&= d_p\big(\phi(t+r, s,\xi ),Y^*(t+r)\big)\\
&\quad+ d_p\big(\phi(t+r, t, Y^*(t)),\phi(t+r,t,\phi(t,s,\xi))\big)\\
&\le d_p\big(\phi(t+r, s,\xi ),Y^*(t+r)\big)+h(r)d_p\big(  Y^*(t), \phi(t,s,\xi) \big)\\
&\longrightarrow0\qquad \mbox{ as }\quad s\to-\8,
\end{split}
\end{equation}
where in the first inequality we used the triangle inequality, in the identity we utilized the flow property, in the second inequality we made use of \eqref{TT4T}, and the last display  is due to \eqref{P*}.
With \eqref{YYY*} at hand, by following the argument to derive \eqref{P--}, we also have
\begin{equation*}
 d_p\big(Y^*(t+\tau,\cdot),Y^*(t,\theta_\tau\cdot)\big)=0
\end{equation*}
This, in addition to \eqref{P--}, yields \eqref{T*} via .
Therefore, to complete the proof of Proposition \ref{pr2},
 it is sufficient to verify \eqref{LL}.
Indeed,
by the triangle inequality and the flow property of $Y^{s,\xi}(t)$, note that \begin{equation*}
 \begin{split}
 d_p\big(Y^{s-r,\xi}(t),Y^{s,\xi}(t)\big)&\le \sum_{j=0}^{\lfloor r/\tau_0\rfloor}d_p\big(Y^{s-(((j+1)\tau_0)\wedge r),\xi}(t),Y^{s-j\tau_0,\xi}(t)\big)\\
 &=\sum_{j=0}^{\lfloor r/\tau_0\rfloor}d_p\big(Y^{s-j\tau,Y^{s-(((j+1)\tau_0)\wedge r),\xi}(s-j\tau_0)}(t),Y^{s-j\tau_0,\xi}(t)\big)\\
 &\le \sum_{j=0}^{\lfloor r/\tau_0\rfloor}h(t-s+j\tau)d_p\big(Y^{s-(((j+1)\tau_0)\wedge r),\xi}(s-j\tau_0),\xi\big)\\
 &\le \sum_{j=0}^\8h(t-s+j\tau_0)\sup_{r\in\R}\sup_{t\ge r }d_p\big(Y^{r,\xi}(t),\xi\big),
  \end{split}
\end{equation*}
where the first inequality is owing to the triangle inequality, the identity holds true thanks to the flow property and  the last two inequality is available by \eqref{TT4T}.
Consequently, \eqref{LL} follows by  taking (ii) and \eqref{L1} into consideration.
\end{proof}

With the help of Lemmas \ref{lem:boundedness1}-\ref{lemma3} and Proposition \ref{pr2}, we can  complete  the proof of Theorem \ref{thm:main1}.
\begin{proof}[Proof of Theorem \ref{thm:main1}]
In terms of Proposition \ref{pr2}, to complete the proof of Theorem \ref{thm:main1},
it is sufficient to examine that for  all $t\ge s$ and $\xi=\xi_s,\eta=\eta_s\in L^2(\OO\to\C,\F_s,\P)$,
\begin{itemize}
\item[(a)]  $\phi(t+\tau,s+\tau,\xi,\omega)=\phi(t,s,\xi,\theta_\tau\omega)$ for all $\omega\in\OO$;

\item[(b)] There exists a decreasing function $h:[0,\8)\to(0,\8)$ satisfying  \eqref{L1} and such that
$$\E\|\phi(t,s,\xi)-\phi(t,s,\eta)\|_\8^2\le h(t-s)\E\|\xi-\eta\|_\8^2;$$

\item[(c)] $\phi$ is ultimately  bounded in the mean-square sense, i.e., there is a constant $C>0$ (independent of $\xi$) such that
$$\sup_{t\ge s}\E\|\phi(t,s,\xi)\|_\8^2\le C(1+\E\|\xi\|_\8^2).$$
\end{itemize}
Trivially, (a) holds true owing to Lemma \ref{lemma3}. On the other hand, (b) is valid by taking advantage of Lemma \ref{lem:stable1}, which is obtained via the synchronous coupling (i.e., the same functional SDEs driven by the same Brownian motion but with different initial values),  and choosing $h(t)=C^{**}\e^{-\ell t}$ for all $t\in\R$. Furthermore, (iii) is verifiable by invoking Lemma \ref{lem:boundedness1}.
\end{proof}

The following  section is devoted to the proof of Theorem \ref{thm3}. First of all, we prepare the following auxiliary lemma.

\begin{lemma}\label{ll3}
Under the assumptions of Theorem \ref{thm3}, there exist constants $C_1,C_2,\ll>0$ such that for all $(t,s)\in\triangle$ and $\xi=\xi_s,\eta=\eta_s\in L^2(\OO\to\C_{\aa_0},\F_s,\P)$,
\begin{equation}\label{B-}
   \E\|X_t^{s,\xi}\|_{\aa_0}^2 \le C_1\big(1+\E\|\xi\|_{\aa_0}^2\big),
\end{equation}
and
\begin{equation}\label{B--}
\E\|X_t^{s,\xi}-X_t^{s,\eta}\|_{\aa_0}^2\le C_2\e^{-\ll (t-s)}\E\|\xi-\eta\|_{\aa_0}^2.
\end{equation}
\end{lemma}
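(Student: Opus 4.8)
The plan is to follow the blueprint of Lemmas~\ref{lem:boundedness1} and~\ref{lem:stable1}, replacing the uniform norm $\|\cdot\|_\8$ by the weighted norm $\|\cdot\|_{\aa_0}$ and carefully tracking the interaction between the exponential weight $\e^{\aa_0\theta}$ and the time integrals of $\ll_1$. For \eqref{B-} I would fix a small $\vv>0$ so that \eqref{B*} remains strictly negative after $O(\vv)$ perturbations, apply It\^o's formula to $\e^{-\int_s^t(\ll_1(r)+\vv)\,\d r}|X^s(t)|^2$, and use \eqref{AA1}, \eqref{AA2} after splitting $b(t,\cdot)$ and $\si(t,\cdot)$ into their values at a general argument and at ${\bf0}$, exactly as in \eqref{E4}--\eqref{E6}. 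For \eqref{B--} I would apply It\^o to $\e^{-\int_s^t\ll_1(r)\,\d r}|\LL^s(t)|^2$ with $\LL^s(t):=X^{s,\xi}(t)-X^{s,\eta}(t)$, as in \eqref{eqn:Ito2}. The two estimates then run in parallel.

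The decisive new ingredient is the treatment of the weighted supremum defining $\|\cdot\|_{\aa_0}$. Using the identity $2\aa_0\theta=-\int_{t+\theta}^t2\aa_0\,\d r$, I would write, for $\theta\le0$,
\begin{equation*}
\e^{2\aa_0\theta}\,\e^{-\int_s^t(\ll_1(r)+\vv)\,\d r}=\e^{-\int_s^{t+\theta}(\ll_1(r)+\vv)\,\d r}\,\e^{-\int_{t+\theta}^t(\ll_1(r)+2\aa_0+\vv)\,\d r},
\end{equation*}
so that the weight is absorbed into a drift integral of $\ll_1+2\aa_0$. By the $\tau$-periodicity of $\ll_1$ and condition \eqref{BB1} (which forces the average of $\ll_1+2\aa_0$ over a period to be nonnegative), the supremum over $\theta\le0$, uniformly in $t$, of $-\int_{t+\theta}^t(\ll_1(r)+2\aa_0)\,\d r$ is finite and bounded above by $\ll_{\aa_0,\tau}$ from \eqref{BB}; this is exactly the step where the \emph{infinite} memory is tamed, since the exponential discount $\e^{2\aa_0\theta}$ compensates the possible growth of $-\int_{t+\theta}^t\ll_1$ as $\theta\to-\8$. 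I would then split the supremum into the dynamic range $t+\theta\ge s$, controlled by the running maximum of $\e^{-\int_s^r(\ll_1+\vv)}|X^s(r)|^2$ over $r\in[s,t]$ up to the factor $\e^{\ll_{\aa_0,\tau}}$, and the initial range $t+\theta<s$, where $X^s(t+\theta)=\xi(t+\theta-s)$ is directly dominated by $\e^{\ll_{\aa_0,\tau}}\E\|\xi\|_{\aa_0}^2$ (resp. $\E\|\xi-\eta\|_{\aa_0}^2$).

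Once the supremum is controlled, I would close the estimate exactly as in the finite-lag case: bound the martingale term by the sharp BDG inequality of Lemma~\ref{ose}, which produces the constant $\chi$; apply Young's inequality to absorb half of $\e^{-\int_s^t(\ll_1+\vv)}\E\|X_t^s\|_{\aa_0}^2$ into the left-hand side; and invoke Gronwall's inequality. The diffusion contributes $\ll_3$ both through the It\^o drift and, after Young, through the BDG term with weight $2\chi^2$, which is precisely how the combination $(1+2\chi^2)\ll_3$ in \eqref{B*} arises. A final application of Lemma~\ref{lem0} converts the accumulated integrals over $[s,t]$ into $\lfloor(t-s)/\tau\rfloor\int_0^\tau(\ll_1+\ll_2+(1+2\chi^2)\ll_3)\,\d u$ plus bounded boundary terms, so that the strict negativity in \eqref{B*} yields both \eqref{B-} and the exponential contraction \eqref{B--} for some $\ll>0$. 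The main obstacle is the weighted-supremum step together with the bookkeeping of $\ll_{\aa_0,\tau}$: unlike the finite-lag setting, where the look-back window has bounded length $r_0$ and only the local quantities $c_*(r_0,\tau),c^*(r_0,\tau)$ appear, here one must dominate a supremum over the unbounded range $\theta\le0$, guarantee via \eqref{BB1} that the initial-segment contribution stays finite, and ensure that the resulting factor $\e^{\ll_{\aa_0,\tau}}$ is absorbed into the constants $C_1,C_2$ and the size of $\ll$ rather than spoiling the sign condition \eqref{B*}.
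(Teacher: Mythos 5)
Your proposal follows essentially the same route as the paper's proof: the decisive step of splitting the weighted supremum $\sup_{\theta\le 0}\e^{2\aa_0\theta}|\cdot|$ into the dynamic range $t+\theta\ge s$ (controlled by $\e^{\ll_{\aa_0,\tau}}$ times the running maximum of $\e^{-\int_s^u\ll_1}|\LL^s(u)|^2$) and the initial range $t+\theta<s$ (controlled via \eqref{BB1} by $\E\|\xi-\eta\|_{\aa_0}^2$), with the weight $\e^{2\aa_0\theta}$ absorbed into the drift integral of $\ll_1+2\aa_0$, is exactly what the paper does in its $I_1\vee I_2$ decomposition, and the closing steps (sharp BDG, Young, Gronwall, Lemma \ref{lem0} with \eqref{B*}) coincide as well. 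The only cosmetic difference is that the paper proves only \eqref{B--} in detail (without the $\vv$-perturbation) and declares \eqref{B-} analogous, whereas you sketch both.
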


\begin{proof}
Herein, we  prove merely \eqref{B--} since \eqref{B-} can be handled similarly by combining  the argument  of Lemma \ref{lem:boundedness1}  with that of \eqref{B--}.
According to the definition of $\|\cdot\|_{\aa_0}$, it is easy to see that  for any $t\ge s,$
\begin{equation*}
\begin{split}
\e^{-\int_s^t\ll_1(u)\,\d u}  \|\LL^s_t\|_{\aa_0}^2
&=\bigg(\e^{-\int_s^t\ll_1(u)\,\d u}\sup_{s-t\le \theta\le0}\big(\e^{2\aa_0\theta}|\LL^s(t+\theta)|^2\big)\bigg)\\
&\qquad\vee\bigg(\e^{-\int_s^t\ll_1(u)\,\d u}\sup_{-\8<\theta\le s-t}\big(\e^{2\aa_0\theta}|\LL^s(t+\theta)|^2\big)\bigg)\\
&=:I_1(t)\vee I_2(t),
\end{split}
\end{equation*}
where $\LL^s_t:=X_t^{s,\xi}-X_t^{s,\eta}$. From \eqref{BB},
 we deduce that
\begin{equation}\label{2BB}
\begin{split}
I_1(t)&=\sup_{s-t\le \theta\le0}\Big(\e^{-\int_{t+\theta}^t(\ll_1(u)+2\aa_0)\d u}\e^{-\int_s^{t+\theta}\ll_1(u)\,\d u}|\LL^s(t+\theta)|^2\Big)\\
&\le \e^{\ll_{\aa_0,\tau}}\sup_{s\le u\le t}\Big( \e^{-\int_s^u\ll_1(v)\,\d v}|\LL^s(u)|^2\Big),
\end{split}
\end{equation}
where in the inequality we used the following fact that
\begin{equation}\label{3B}
\begin{split}
-\int_{t+\theta}^t(\ll_1(u)+2r)\,\d u
&=\frac{1}{\tau}\big(\theta-\theta-\lfloor -\theta/\tau\rfloor\tau\big)\int_0^\tau(\ll_1(u)+2\aa_0)\,\d u\\
&\quad-\int_{t+\theta-\lfloor(t+\theta)/\tau}^{t+\theta-\lfloor (t+\theta)/\tau\rfloor\tau-\theta-\lfloor-\theta/\tau\rfloor\tau}(\ll_1(u)+2\aa_0)\,\d u
\end{split}
\end{equation}
by taking  Lemma \ref{lem0} into consideration.
 Next, we derive that
\begin{equation}\label{4B}
\begin{split}
I_2(t)&=\e^{-\int_s^t(\ll_1(u)+2\aa_0)\,\d u}\sup_{-\8<\theta+t-s\le 0}\big(\e^{2\aa_0(\theta+t-s)}|\LL^s(s+\theta+t-s)|^2\big)\\
&=\e^{-\int_s^t(\ll_1(u)+2\aa_0)\,\d u}\sup_{-\8<\theta\le 0}\big(\e^{2\aa_0 \theta }|\LL^s(s+\theta)|^2\big)\\
&=\e^{-\int_s^t(\ll_1(u)+2\aa_0)\,\d u} \|\xi-\eta\|_{\aa_0}^2\\
&\le c_0\|\xi-\eta\|_{\aa_0}^2
\end{split}
\end{equation}
 for some constant $c_0>1,$ where in the third identity we exploited the notion of $\|\cdot\|_{\aa_0}$  and in the inequality we employed Lemma \ref{lem0} and
\eqref{BB1}.
Subsequently, combining \eqref{2BB} with \eqref{4B} yields
\begin{equation}\label{7B}
\e^{-\int_s^t\ll_1(u)\,\d u}  \|\LL^s_t\|_{\aa_0}^2\le  \bigg(\e^{\ll_{\aa_0,\tau}}\sup_{s\le u\le t}\Big( \e^{-\int_s^u\ll_1(v)\,\d v}|\LL^s(u)|^2\Big)\bigg)\vee\big(c_0 \|\xi-\eta\|_{\aa_0}^2\big).
\end{equation}
By It\^o's formula and BDG's inequality (i.e., Lemma \ref{ose}), it follows from \eqref{AA1} and \eqref{AA2} that
\begin{equation*}
\begin{split}
\E\bigg(\sup_{s\le u \le t}\Big(\e^{-\int_s^u\ll_1(v)\,\d v}  |\LL^s(u)|^2\Big)\bigg)
&\le \E\|\xi-\eta\|_{\aa_0}^2+\int_s^t(\ll_2(u)+\ll_3(u))\E\Big(\e^{-\int_s^u\ll_1(v)\,\d v} \|\LL^s_u\|^2_{\aa_0}\Big)\,\d u\\
&\quad+ \frac{1}{2}\E\bigg(\sup_{s\le u \le t}\Big(\e^{-\int_s^u\ll_1(v)\,\d v}  |\LL^s(u)|^2\Big)\bigg)\\
&\quad+2\chi^2\int_s^t\ll_3(u)\e^{-\int_s^u\ll_1(v)\,\d v}  \|\LL^s_u\|_{\aa_0}^2\,\d  u.
\end{split}
\end{equation*}
That is,
\begin{equation*}
\begin{split}
\E\bigg(\sup_{s\le u \le t}\Big(\e^{-\int_s^u\ll_1(v)\,\d v}  |\LL^s(u)|^2\Big)\bigg)
&\le 2\E\|\xi-\eta\|_{\aa_0}^2\\
&\quad+2\int_s^t\big(\ll_2(u)+(1+2\chi^2)\ll_3(u)\big)\e^{-\int_s^u\ll_1(v)\,\d v}  \|\LL^s_u\|_{\aa_0}^2\,\d  u.
\end{split}
\end{equation*}
This, together with \eqref{7B}, implies
\begin{equation*}
\begin{split}
\e^{-\int_s^t\ll_1(u)\,\d u}
\E\|\LL^s_t\|_{\aa_0}^2&\le \big(2\,\e^{\ll_{\aa_0,\tau}}\vee c_0\big)\E\|\xi-\eta\|_{\aa_0}^2\\
&\quad+2\e^{\ll_{\aa_0,\tau}}\int_s^t\big(\ll_2(u)+(1+2\chi^2)\ll_3(u)\big)\E\Big(\e^{-\int_s^u\ll_1(v)\,\d v} \|\LL^s_u\|^2_{\aa_0}\Big)\,\d u.
\end{split}
\end{equation*}
Consequently, \eqref{B--} follows from   Gronwall's inequality and by taking  \eqref{B*}
into account.
\end{proof}

With the aid of Lemma \ref{ll3}, we complete the
\begin{proof}[Proof of Theorem \ref{thm3}]
By implementing a similar argument of Lemma \ref{lemma3}, we infer that the stochastic semi-flow, defined in \eqref{J*}, satisfies \eqref{YYY*} with $\mathbb U=\C_{\aa_0}$. Moreover, according to Lemma \ref{ll3} above, we find that (ii) and (iii) in Proposition \ref{pr2} with $p=2$ and $\rho(\xi,\eta)=\|\xi-\eta\|_{\aa_0}, \xi,\eta\in\C_{\aa_0}$,
hold, respectively, with $h(t)=C\e^{-\ll t}$  for some constants $C,\ll>0$, which obviously satisfies \eqref{L1}. Therefore, the proof of Theorem \ref{thm3} is finished by applying  Proposition \ref{pr2} with  $\mathbb U=\C_{\aa_0}$ and $p=2$.
\end{proof}

\section{Appendix}
In this section, to make the content of this work self-contained,
let's
 recall the definition of  random periodic solution for stochastic semi-flows given in \cite{rpssde2011}.
Let $H$ be a separable Banach space and  $(\Omega,\F,\mathbb{P},(\theta_s)_{s\in \mathbb{R}})$ a metric dynamical system.  Consider a stochastic semi-flow $u: \Delta \times \Omega\times H\to H$, which satisfies the following standard condition:
  for  all    $ r\leq s\leq t,\ r, s,t\in \mathbb{R}$ and  $ \omega\in\Omega$,
\begin{eqnarray*}{\label{16}}
u(t,r,\omega)=u(t,s,\omega)\circ u(s,r,\omega) .
\end{eqnarray*}
We remark that
  the map $u(t,s,\omega): H\to H$ need not to be invertible for $(t,s)\in \Delta$ and $\omega \in \Omega$.

\begin{definition}
A  pathwise  random  periodic solution with period $\tau$ of the semi-flow  $u: \Delta\times H\times \Omega\to H$
 is an $\F$-measurable
map $y:\mathbb{R}\times \Omega\to H$ such that for   a.e. $\omega\in \Omega$,
\begin{eqnarray*}
u(t,s, y(s,\omega), \omega)=y(t,\omega),\qquad
y(s+\tau,\omega)=y(s, \theta_\tau \omega),\qquad  (t,s)\in  \triangle.
\end{eqnarray*}
\end{definition}

\begin{definition}
A    random  $\tau$-periodic solution in the sense of distribution of the semi-flow  $u: \Delta\times H\times \Omega\to H$
 is an $\F$-measurable
map $y:\mathbb{R}\times \Omega\to H$ such that for all $ (t,s)\in  \triangle$,
\begin{eqnarray*}
u(t,s, y(s,\omega), \omega)\overset{d}{=}y(t,\omega),\qquad
y(s+\tau,\omega)\overset{d}{=}y(s, \theta_\tau \omega).
\end{eqnarray*}
\end{definition}

\end{document}